\documentclass[12pt,reqno]{amsart}
\usepackage[top=2.5cm, left=3cm, right=3cm, bottom=2.5cm]{geometry}
\usepackage{amscd,amsmath,latexsym,amssymb,mathtools,amsthm}
\usepackage{thmtools}
\usepackage{enumerate} 

\usepackage[dvipsnames]{xcolor}
\usepackage{graphicx}
\usepackage{tikz}
\usepackage{wrapfig}
\usepackage{float}
\usepackage{caption}
\usepackage{subcaption}

\usepackage{tikz-cd}
\usepackage{tkz-euclide}
\usepackage{multicol}

\definecolor{lnkcol}{rgb}{0,0.25,0.75}
\usepackage[colorlinks=true, linkcolor=lnkcol, citecolor=magenta, urlcolor=blue]{hyperref}

\newtheorem{introthm}{Theorem}

\newtheorem{introcor}[introthm]{Corollary}

\newtheorem{introconj}[introthm]{Conjecture}

\newtheorem{theorem}{Theorem}[section]

\newtheorem{proposition}[theorem]{Proposition}
\newtheorem{lemma}[theorem]{Lemma}
\newtheorem{definition}[theorem]{Definition}

\numberwithin{figure}{section}
\numberwithin{equation}{theorem}

\title{On 2-Sphere Bowditch Boundaries Attaining Conformal Dimension} 

\author{Abhijit Pal}
\address{Indian Institute of Technology Kanpur}
\email{abhipal@iitk.ac.in}

\author{Rana Sardar}
\address{Indian Institute of Science Education and Research, Mohali}
\email{ranasardar@iisermohali.ac.in}

\subjclass[2020]{20F67, 30C65, 11K55}
\date{\today}
\keywords{Relative Cannon's Conjecture, Kleinian groups, relatively hyperbolic groups, 2-sphere Bowditch boundary, Ahlfors regular conformal dimension, quasisymmetric uniformization, Loewner spaces} 

\begin{document}

\begin{abstract} 
    Bonk and Kleiner proved that if $G$ is a Gromov hyperbolic group whose boundary $\partial_{\infty}G$ is homeomorphic to an Ahlfors $Q$-regular metric $2$-sphere $Z$, and the Ahlfors regular conformal dimension of $Z$ is attained and equal to $Q$, then $G$ acts discretely, cocompactly, and isometrically on $\mathbb{H}^3$. In this article, we extend the Bonk-Kleiner theorem to the setting of relatively hyperbolic groups. More precisely, we prove that if $(G,\mathcal{H})$ is a relatively hyperbolic group whose Bowditch boundary is homeomorphic to an Ahlfors $Q$-regular metric $2$-sphere $Z$, with the Ahlfors regular conformal dimension of $Z$ attained and equal to $Q$, then $G$ acts discretely and isometrically on $\mathbb{H}^3$, and every subgroup in $\mathcal{H}$ is virtually $\mathbb Z^2$.
\end{abstract}

\maketitle

\section{Introduction}

Given a finitely generated group $G$, a fundamental problem in geometric group theory is to determine when $G$ is Kleinian. Recall that a \emph{Kleinian group} is a discrete subgroup of \(\operatorname{PSL}(2,\mathbb{C}) \cong \operatorname{Isom}^+(\mathbb{H}^3)\), the group of orientation-preserving isometries of hyperbolic $3$-space $\mathbb{H}^3$. We say that $G$ is a \emph{cocompact Kleinian group} if its action on $\mathbb{H}^3$ is cocompact. One of the central conjectures in this direction is Cannon's  Conjecture, which predicts that a hyperbolic group with $2$-sphere boundary is a cocompact Kleinian Group.

\begin{introconj}[Cannon's  Conjecture, {\cite[Conjecture~9.1]{KB2002}, \cite[Problem~53]{Kapovich2005}}]
    Let $G$ be a hyperbolic group whose Gromov boundary $\partial_\infty G$ is homeomorphic to the standard $2$-sphere $\mathbb{S}^2$. Then $G$ admits a discrete, cocompact, isometric action on hyperbolic $3$-space $\mathbb{H}^3$
\end{introconj}

A natural extension of this problem arises for relatively hyperbolic groups. Let $G$ be hyperbolic relative to 
 An analogue of Cannon's  Conjecture for relatively hyperbolic groups was proposed in \cite[Problem~57]{Kapovich2005}. 

\begin{introconj}[Relative Cannon's  Conjecture] 
    Let $G$ be a group hyperbolic relative to a collection ${H_1,\ldots,H_k}$ of virtually $\mathbb{Z}^2$ subgroups. If the Bowditch boundary of $G$ is homeomorphic to $\mathbb{S}^2$, then $G$ is commensurable with the fundamental group of a finite-volume hyperbolic $3$-manifold.
\end{introconj}

 Bonk and Kleiner proved that a compact Ahlfors $Q$-regular Loewner space homeomorphic to the standard sphere $\mathbb{S}^2$ must have $Q=2$ and must be quasisymmetrically equivalent to $\mathbb{S}^2$; see \cite[Theorem~1.2]{BK2002Qs}. Combining this result with their work on boundaries of hyperbolic groups, they obtained the following consequence: if the boundary of a hyperbolic group attains its Ahlfors regular conformal dimension, then the group is a cocompact Kleinian group; see \cite{BK2005}.

The purpose of this paper is to establish a corresponding Loewner criterion for compact metric spaces equipped with certain quasisymmetric group actions. 

\begin{introthm}\label{thm_main_1}
    Let $(Z,d_Z)$ be a compact Ahlfors $Q$-regular metric space whose Ahlfors regular conformal dimension is $Q>1$. Suppose that $Z$ admits a fixed point free, uniformly quasisymmetric action $G\curvearrowright Z$ such that the induced action on the space of distinct pairs $\operatorname{Pair}(Z)$ is cocompact. Then $Z$ is $Q$-Loewner.
\end{introthm}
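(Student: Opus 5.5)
The plan is to follow the strategy of Bonk and Kleiner in \cite{BK2005}, where the hyperbolic group case is treated, and to replace their use of cocompactness on triples by cocompactness on $\operatorname{Pair}(Z)$.

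\textbf{Step 1: detecting dimension via weak tangents.} Recall Keith and Laakso's theorem: if a compact Ahlfors $Q$-regular space $Z$ has Ahlfors regular conformal dimension equal to $Q$, then some weak tangent of $Z$ has a family of nonconstant curves of positive $Q$-modulus. So I would first produce a weak tangent $W$ of $Z$, at some point and scale sequence, that carries a curve family $\Gamma_0$ with $\operatorname{mod}_Q(\Gamma_0)>0$.

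\textbf{Step 2: transporting the curve family back into $Z$.} This is where the group action enters. The weak tangent $W$ is a pointed Gromov--Hausdorff limit of rescaled balls $(Z,\lambda_n d_Z,z_n)$ with $\lambda_n\to\infty$. Choose two points $a_n,b_n$ in the $n$-th rescaled ball at unit distance. Cocompactness on $\operatorname{Pair}(Z)$ gives $g_n\in G$ with $(g_na_n,g_nb_n)$ in a fixed compact set $K\subset \operatorname{Pair}(Z)$, so that $d_Z(g_na_n,g_nb_n)\ge\delta>0$. Uniform quasisymmetry then yields an $\eta$-quasisymmetric map from the ball $B(z_n,R/\lambda_n)$ with the rescaled metric onto a subset of $Z$ of diameter comparable to $1$. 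Passing to the limit with Arzel\`a--Ascoli, I obtain a quasisymmetric embedding $\phi:W\to Z$, or at least of large balls of $W$. I must rule out degenerations in which the images collapse or a limit point escapes. Here fixed point freeness is used: together with cocompactness on pairs, it prevents the $g_n$ from converging to a parabolic-like map concentrating everything at one point, and it gives a uniform lower bound on the diameters of the images. Ahlfors $Q$-regularity on both sides, with equal dimension $Q$, then implies, by Tyson's theorem on modulus and quasisymmetries between $Q$-regular spaces, that quasisymmetric maps quasi-preserve $Q$-modulus. The image family $\phi(\Gamma_0)$ therefore has positive $Q$-modulus in $Z$, localized in an open set.

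\textbf{Step 3: upgrading to the Loewner property.} From one family of positive modulus, I would use the action to spread it around. Uniform quasisymmetry together with cocompactness on pairs gives a ``uniformly quasi-self-similar'' structure: every ball $B(z,r)$ maps by a uniformly quasisymmetric map onto a ball of size comparable to $1$, and, by compactness of $K$ and minimality-type properties, onto one of finitely many model regions that contain the image of $\Gamma_0$. The outcome is a uniform lower bound for the modulus of curves joining two continua of relative distance comparable to $1$ at every location and scale. Finally, I would apply the Bonk--Kleiner criterion from \cite{BK2005}, which says that in a compact, connected, Ahlfors $Q$-regular space, a uniform positive lower modulus bound at all scales for pairs of continua with bounded relative distance implies $Q$-Loewner. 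The Heinonen--Koskela theory should supply the promotion from this ``weak Loewner'' property to the full Loewner property. Connectedness of $Z$ follows because $Z$ contains a nonconstant rectifiable curve, combined with the action being cocompact on pairs.

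\textbf{Main obstacle.} I expect Step 2 to be the hardest: the limiting argument that yields a nondegenerate quasisymmetric embedding of the weak tangent into $Z$. In \cite{BK2005} this uses cocompactness on triples, which is available for hyperbolic groups. Here only pairs are controlled, and parabolic points can cause rescalings to degenerate. My first attempt would be to choose the normalizing pairs cleverly, picking $a_n,b_n$ so that a third point at comparable distance also stays controlled. Fixed point freeness and the $\eta$-quasisymmetry should then give three-point control, which is enough for equicontinuity and nondegeneracy.
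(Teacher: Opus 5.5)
\textbf{The genuine gap is in Step~3.} Your outline matches the paper's: Keith--Laakso, then transfer through a weak tangent normalized by cocompactness on pairs, then spreading by the action, then the Bonk--Kleiner ball criterion. The problem is the spreading step. After normalizing balls $B=B(z,R)$, $B'=B(z',R)$ with $\operatorname{dist}(B,B')\le CR$ by some $g\in G$, you get $g(B)\supset B(w,2\varepsilon)$ and $g(B')\supset B(w',2\varepsilon)$ with $d_Z(w,w')\ge\delta$. A compactness or contradiction argument then yields a uniform bound only if you already know $\operatorname{Mod}_Q\Gamma\bigl(B(w,\varepsilon),B(w',\varepsilon)\bigr)>0$ for \emph{every} pair $w\neq w'$.

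A single positive-modulus family $\Gamma_0$ joining two particular regions $U,U'$ does not give this. Minimality only says single orbits are dense, so you can put $g(U)$ near $w$ but have no control over where $g(U')$ lands. The phrase ``finitely many model regions containing the image of $\Gamma_0$'' is not supported by anything in your sketch. What you need is the two-point statement that pairs of points joined by thick paths are dense in $Z\times Z$ (Proposition~\ref{prop:end_points_of_thick_paths_are_dense}). The paper proves it by a blow-up at a density point:
\begin{enumerate}[$(i)$]
\item By Bonk--Kleiner's lemma there are disjoint balls $B_0,B_0'$ such that the initial points of thick paths from $B_0$ to $B_0'$ have a density point $x$.
\item One blows up $B(x,R_k)$ by $g_k\in G$, chosen so that a pair at relative scale $\sqrt{\varepsilon_k}\to0$ becomes $\delta$-separated.
\item Lemma~\ref{lem:tech_2} then shows that the images of those initial points become dense in $Z$, while $g_k(B_0')$ collapses to a single point $z$.
\item Only then does minimality move $z$ near any prescribed point. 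This is where fixed-point-freeness is actually used (Lemma~\ref{lem:minimal_action}).
\end{enumerate}

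\textbf{Step~2: the obstacle you anticipate is not the real one, and the real one is not addressed.} Two-point normalization already gives a nondegenerate limit. If the limit $\phi$ identified two points $x\neq b$, passing to the limit in the quasisymmetry inequality would give $d(\phi(x),\phi(c))\le d(\phi(x),\phi(b))\,\eta(\cdot)=0$ for all $c$. Then $\phi$ would be constant, contradicting the $\delta$-separation of $g_na_n,g_nb_n$. So no third point and no fixed-point-freeness are needed here.

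What you do need is that $\phi(W)$ is open in $Z$; the paper shows it is $Z$ minus one point. Tyson's theorem concerns quasisymmetric homeomorphisms between $Q$-regular spaces, and if $\phi(W)$ were $\mu$-null then $\phi(\Gamma_0)$ would have modulus zero. The paper obtains a quasisymmetric homeomorphism $\widehat W\to Z$ by placing the normalizing pair at scale $\lambda_kR_k$, $\lambda_k\to0$, inside $B(p_k,R_k)$. The quasi-M\"obius inequality then forces $\operatorname{diam}(Z\setminus g_k(B_k))\to0$ and makes dense subsets of $B_k$ have asymptotically dense images. Your construction (a unit pair inside balls of radius $R\to\infty$) fits this setup, but the argument has to be made.

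\textbf{Minor points.}
\begin{enumerate}[$(i)$]
\item The Bonk--Kleiner criterion is for balls of radius $R$ at distance at most $CR$, joined by curves of length at most $LR$. The length cutoff comes from \cite[Lemma~2.8]{BK2005}, and no Heinonen--Koskela upgrade is needed.
\item A lower bound for arbitrary continua of bounded relative distance is already the Loewner condition, so phrasing the intermediate step that way is circular.
\item Connectedness does not follow from $Z$ containing one curve. It follows once any two balls are joined by a positive-modulus family.
\end{enumerate}
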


This theorem has an immediate consequence if in additional the space $Z$ is homeomorphic to the \(2\)-sphere. By the Bonk-Kleiner's theorem \cite[Theorem~1.2]{BK2002Qs}  \(Q=2\) and \(Z\) is quasisymmetrically equivalent to the standard \(2\)-sphere.

\begin{introcor}\label{thm_main_2}
    Let $(Z,d_Z)$ be an Ahlfors $Q$-regular metric space, homeomorphic to the standard sphere $\mathbb{S}^2$, and suppose that its Ahlfors regular conformal dimension is attained and equal to $Q\geq 2$. Suppose that $Z$ admits a fixed point free, uniformly quasisymmetric action $G\curvearrowright Z$ such that the induced action on the space of distinct pairs $\operatorname{Pair}(Z)$ is cocompact. Then, $Q = 2$ and $Z$ is quasisymmetric to $\mathbb{S}^2$.
\end{introcor}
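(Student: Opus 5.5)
The corollary follows by combining Theorem~\ref{thm_main_1} with the Bonk--Kleiner uniformization theorem \cite[Theorem~1.2]{BK2002Qs}. The plan is to verify the hypotheses of Theorem~\ref{thm_main_1}, conclude that $Z$ is $Q$-Loewner, and then apply Bonk--Kleiner.

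\textbf{Step 1: hypotheses of Theorem~\ref{thm_main_1}.}
\begin{itemize}
\item $Z$ is homeomorphic to $\mathbb{S}^2$, so it is compact.
\item By assumption $Z$ is Ahlfors $Q$-regular.
\item Its Ahlfors regular conformal dimension is attained and equals $Q\ge 2>1$.
\item The action $G\curvearrowright Z$ is fixed point free and uniformly quasisymmetric, and the induced action on $\operatorname{Pair}(Z)$ is cocompact.
\end{itemize}
These are exactly the hypotheses of Theorem~\ref{thm_main_1}. The only point to check is that ``attained and equal to $Q$'' matches the hypothesis there that the conformal dimension of $Z$ is $Q$. This holds because $Z$ itself is Ahlfors $Q$-regular, so $Q$ is realized by the given metric $d_Z$. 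Theorem~\ref{thm_main_1} therefore says that $(Z,d_Z)$ is a $Q$-Loewner space.

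\textbf{Step 2: Bonk--Kleiner.} We now have a compact, Ahlfors $Q$-regular, $Q$-Loewner metric space homeomorphic to $\mathbb{S}^2$. By \cite[Theorem~1.2]{BK2002Qs}, $Q=2$ and $Z$ is quasisymmetrically equivalent to the standard sphere $\mathbb{S}^2$.

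No real obstacle arises here, since the substantive work is contained in Theorem~\ref{thm_main_1}. The only care needed is to keep the same metric throughout. The Loewner property from Theorem~\ref{thm_main_1} is for $d_Z$ itself, not for some quasisymmetrically equivalent metric. This is precisely the form in which \cite[Theorem~1.2]{BK2002Qs} requires its input.
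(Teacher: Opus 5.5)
Your proposal is correct and follows the same route as the paper. You verify the hypotheses of Theorem~\ref{thm_main_1} (compactness from $Z\cong\mathbb{S}^2$, and $Q\ge 2>1$) to conclude that $(Z,d_Z)$ is $Q$-Loewner, then apply \cite[Theorem~1.2]{BK2002Qs} to get $Q=2$ and a quasisymmetric identification with $\mathbb{S}^2$; your remarks on compactness and on keeping the same metric $d_Z$ make explicit what the paper leaves implicit.
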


We now apply the preceding results to obtain a `relative Cannon conjecture' type result for relatively hyperbolic groups whose Bowditch boundary is a $2$-sphere. Let $G$ be a group hyperbolic relative to a finite collection $\mathcal H$ of subgroups.
We denote by $Z=\partial(G,\mathcal{H})$
the Bowditch boundary of the relatively hyperbolic pair $(G,\mathcal H)$. Then $Z$ is compact and $G$ admits a fixed-point-free, uniformly quasisymmetric (equivalently, quasi-Möbius) action on $Z$; see \cite{Mackay-Sisto,PS2026,Sardar2026}. Moreover, the induced action $G\curvearrowright \operatorname{Pair}(Z)$ on the space of distinct pairs is cocompact. 
Now assume, in addition, that $Z$ is homeomorphic to the standard sphere $\mathbb{S}^2$ and is Ahlfors $Q$-regular, where $Q\geq 2$. Since every Ahlfors regular metric space is uniformly perfect and doubling, it follows that $Z$ has these properties. Further, assume that $Q$ is the Ahlfors regular conformal dimension of $Z$. Thus, the Ahlfors regular conformal dimension of $Z$ is attained. Applying Corollary~\ref{thm_main_2}, there exists a quasisymmetric homeomorphism 
\[
h : Z \to \mathbb{S}^2.
\] 
Conjugating the action $G \curvearrowright Z$ by $h$ gives an action $G \curvearrowright \mathbb{S}^2$ which is still uniformly quasisymmetric. 
Since every quasisymmetric self-homeomorphism of $\mathbb S^2$ is quasiconformal, this gives a uniformly quasiconformal action on $\mathbb S^2$.
A theorem of Sullivan and Tukia (see \cite[p.~468]{Sullivan1981} and \cite[Theorem~F and Remark~F2]{Tukia1986}) states that a uniformly quasiconformal group action on $\mathbb{S}^2$ is quasiconformally conjugate to an action by M\"{o}bius transformations. 
Thus, once the boundary has been identified quasisymmetrically with $\mathbb S^2$, the dynamics of the original action can be straightened to a conformal action. Hence, $G$ is a discrete subgroup of $\operatorname{PSL}(2,\mathbb{C})$. 
Mackay-Sisto {\cite[Proposition~4.5]{MS2020}} showed that Bowditch boundary is doubling if and only if every  subgroup in $\mathcal H$ is virtually nilpotent. The nilpotent subgroups of $\operatorname{PSL}(2,\mathbb{C})$ are virtually abelian. The subgroups in $\mathcal{H}$ are virtually abelian.
A subgroup $H\in\mathcal H$ is discrete and acts cocompactly on a horosphere. So, rank of $H$ is $2$ and $H$ is virtually $\mathbb Z^2$. Thus, we obtain the following relative Cannon conjecture-type result.

\begin{introthm}\label{thm_our_rel_conjec}
    Let $(G,\mathcal{H})$ be a relatively hyperbolic group whose Bowditch boundary $\partial(G,\mathcal{H})$ is homeomorphic to the standard $2$-sphere $\mathbb{S}^2$ and is Ahlfors regular with respect to a visual metric. If the Ahlfors regular conformal dimension of $\partial(G,\mathcal{H})$ is attained, then $G$ is a Kleinian group. Moreover, the peripheral subgroups are virtually $\mathbb Z^2$. 
\end{introthm}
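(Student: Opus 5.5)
The plan is to follow the chain of reductions sketched just before the statement. I will make sure each hypothesis of Corollary~\ref{thm_main_2} is verified, and then check the two group-theoretic conclusions carefully.

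First, set $Z=\partial(G,\mathcal H)$ with a visual metric, assumed Ahlfors $Q$-regular. Write $Q_0$ for the Ahlfors regular conformal dimension, and choose a metric in the conformal gauge that attains it. That metric is quasisymmetric to the visual one, so $G$ still acts uniformly quasisymmetrically, and we may take $Q=Q_0$.

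Next I check the hypotheses of Corollary~\ref{thm_main_2}:
\begin{enumerate}[(i)]
\item Since $Z\cong\mathbb S^2$ has topological dimension $2$, and conformal dimension is at least topological dimension, we get $Q\ge 2$.
\item The action $G\curvearrowright Z$ is fixed point free and uniformly quasisymmetric (quasi-M\"obius) by \cite{Mackay-Sisto,PS2026,Sardar2026}.
\item The action on $\operatorname{Pair}(Z)$ is cocompact, because $G$ acts on $Z$ as a geometrically finite convergence group. Cocompactness on pairs of distinct points is then the standard Bowditch fact.
\end{enumerate}
I expect (iii) to need the most care. The presence of parabolic points makes it less obvious than in the hyperbolic case. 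I would argue via a cusp-uniform space $X$: a pair $(a,b)$ determines a bi-infinite geodesic in $X$. The geodesic can be translated by $G$ so that it passes within a uniform distance of a fixed compact set, because the thick part is cocompact and geodesics cannot stay deep in a single horoball for their whole length. This gives cocompactness on $\operatorname{Pair}(Z)$.

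Corollary~\ref{thm_main_2} then yields a quasisymmetric homeomorphism $h\colon Z\to\mathbb S^2$. Conjugating by $h$ gives a uniformly quasiconformal action of $G$ on $\mathbb S^2$. By Sullivan--Tukia \cite{Sullivan1981,Tukia1986}, this action is quasiconformally conjugate to a M\"obius action $\rho\colon G\to\operatorname{PSL}(2,\mathbb C)$. The kernel of $\rho$ is the kernel of the original action on $Z$, and that kernel is finite. Modulo this finite kernel, I will treat $\rho$ as faithful.

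To show $\rho(G)$ is discrete, I use that $\rho(G)$ acts on $\mathbb S^2$ as a convergence group, since convergence is a topological property preserved by conjugacy. A M\"obius group acting as a convergence group on $\mathbb S^2$ is discrete. Otherwise it would contain a sequence of distinct elements tending to the identity, and such a sequence has no collapsing subsequence. Hence $G$ is Kleinian.

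For the peripheral subgroups, Mackay--Sisto \cite{MS2020} shows that doubling of $Z$, which follows from Ahlfors regularity, forces each $H\in\mathcal H$ to be virtually nilpotent. Each $H$ is the stabilizer of a bounded parabolic point $p$, so $\rho(H)$ is a discrete group of parabolic M\"obius maps fixing $p$. Conjugating $p$ to $\infty$ makes $\rho(H)$ a discrete group of Euclidean similarities, in fact isometries, of $\mathbb C$. It is therefore virtually $\mathbb Z^k$ with $k\le 2$. Bounded parabolicity says $H$ acts cocompactly on $Z\setminus\{p\}\cong\mathbb C$, which forces $k=2$. So $H$ is virtually $\mathbb Z^2$.
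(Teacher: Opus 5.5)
Your proposal is correct and follows the paper's route: verify the hypotheses of Corollary~\ref{thm_main_2}, obtain a quasisymmetric map $Z\to\mathbb S^2$, straighten the conjugated action by Sullivan--Tukia, and identify the peripheral subgroups as cocompact discrete groups of Euclidean isometries. Your cusp-uniform argument for cocompactness on pairs is unneeded, since that is part of the paper's definition of relative hyperbolicity, while the other details you add --- the finite kernel (so ``Kleinian'' should be read as in the abstract, a discrete isometric action on $\mathbb H^3$), discreteness deduced from the convergence property rather than from Sullivan--Tukia alone, and $Q\ge 2$ from topological dimension --- are points the paper's sketch passes over, and you handle them correctly.
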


It is also natural to ask whether the condition in Theorem~\ref{thm_our_rel_conjec} is necessary for Kleinian groups. In the setting relevant to the Relative Cannon's Conjecture, the answer is affirmative at the level of the boundary conformal structure. Indeed, let $G$ be a Kleinian group and let $\mathcal{H}$ be a finite collection of finitely generated, virtually abelian maximal parabolic subgroups such that $(G,\mathcal H)$ is relatively hyperbolic. Then $G$ acts properly discontinuously on $\mathbb H^3$ and cocompactly on the truncated hyperbolic space $\mathbb{H}^3 \setminus \mathcal{H}^h$, where $\mathcal H^h$ is the corresponding collection of horoballs. Hence, by the Milnor-Švarc lemma, $G$ is quasi-isometric to $\mathbb{H}^3 \setminus \mathcal{H}^h$. Results of Mackay-Sisto and Pal-Sardar imply that the Bowditch boundary is quasisymmetrically, equivalently quasi-M\"{o}bius, homeomorphic to $\mathbb S^2$; see \cite[Corollary~1.3]{Mackay-Sisto} and \cite[Theorem~1]{PS2026}, see also \cite{Sardar2026}. Since the standard sphere is Ahlfors $2$-regular and $\operatorname{Cdim_{AR}}(\mathbb{S}^2)=2$, it follows that $\operatorname{Cdim_{AR}}\bigl(\partial(G,\mathcal H)\bigr)=2,$ and this conformal dimension is attained.

There exists a relatively hyperbolic group $(G,\mathcal{H})$ whose Bowditch boundary is homeomorphic, but not quasisymmetric, to $\mathbb S^2$. An example is obtained as follows. Let $M$ be a compact hyperbolic $3$-manifold with nonempty totally geodesic boundary
$\partial M=S_g$, $g\ge2$, and set
\[
G=\pi_1(M),\qquad H=\pi_1(S_g).
\]
Then $H$ is almost malnormal and quasiconvex in $G$, so $(G,H)$ is relatively hyperbolic by Bowditch's theorem \cite[Theorem~7.11]{Bowditch2012}. The limit set of $G$ is a Sierpi\'nski carpet, and collapsing its boundary circles to points gives $\partial(G,H)\cong\mathbb S^2$ by Moore's Decomposition Theorem \cite{Moore1925} and Manning's description of the Bowditch boundary \cite{Manning2020}. However, the Bowditch boundary $\partial(G,H)$ is not Ahlfors regular, since the parabolic subgroup $H$ is not virtually nilpotent, by \cite[Proposition~4.1]{MS2020}. Consequently, $\partial(G,H)$ is not quasisymmetric to $\mathbb S^2$.

\bigskip

Our approach is inspired by the work of Bonk and Kleiner~\cite{BK2005}. We extend their argument by replacing cocompactness on triples with the weaker assumption of cocompactness on pairs. The proof relies on the equivalence between quasi-Möbius and quasisymmetric maps for bounded metric spaces.

\subsection*{Outline of the paper}
The paper is organized as follows. Section~2 recalls the necessary background. In Section~3, we study cocompact actions on distinct pairs and their dynamical properties. Section~4 introduces weak tangents and studies their behaviour under uniformly quasisymmetric actions. In Section~5, we develop the thick-path machinery and establish the existence and density of thick paths. Finally, Section~6 combines these results with modulus estimates to prove the Loewner property and Theorem~\ref{thm_main_1}.

\newpage
\section{Preliminaries}
\label{sec:preliminaries} 

\subsection{Relatively Hyperbolic Groups and Boundaries}
\leavevmode 
\smallskip

Several notions of relative hyperbolicity appear in the literature. We refer to Hruska’s article \cite{Hruska2010} for a discussion of the equivalence of these definitions.
In this article,  we adopt the dynamical characterization of relative hyperbolicity due to Gerasimov \cite[Definition $RH_{32}$]{Gera2012} as our definition. 

\begin{definition}[Relatively hyperbolic group] \label{defn:rel_hyp_gp}
    Let $G$ be a finitely generated discrete group, and let $\mathcal{H}=\{H_1,\ldots,H_m\}$ be a finite collection of subgroups of $G$. We say that the pair $(G,\mathcal{H})$ is \emph{relatively hyperbolic} if there exists a compact Hausdorff space $Z$ together with an action \(G\curvearrowright Z\) by homeomorphisms satisfying the following conditions:
    \begin{enumerate}[$(i)$]
        \item The action is a \emph{$3$-proper} (or \emph{convergence action}); that is, the induced action on the space of distinct ordered triples $\operatorname{Tri}(Z)$ is proper, where
        \[
        \operatorname{Tri}(Z)=\{(x,y,z)\in Z^3 : x,y,z \text{ are pairwise distinct}\}.
        \]

        \item The action is \emph{$2$-cocompact} (or \emph{expansive}); namely, \(\operatorname{Pair}(Z)/G\) is compact, where
    \[
    \operatorname{Pair}(Z)=\{(x,y)\in Z\times Z : x\neq y\}.
    \]

        \item The collection $\mathcal{H}$ consists of representatives of the conjugacy classes of stabilizers of the parabolic points of the action.
    \end{enumerate}

    Here, a point $p\in Z$ is called \emph{parabolic} if $\operatorname{Stab}_G(p)$ is infinite and acts cocompactly on $Z\setminus\{p\}$. The subgroup $\operatorname{Stab}_G(p)$ is called the \emph{parabolic subgroup} associated to $p$.
\end{definition} 

The compactum $Z$ in Definition~\ref{defn:rel_hyp_gp} is called  the Bowditch boundary of $(G,\mathcal H)$. We denote it by $\partial(G,\mathcal H)$. In Definition~\ref{defn:rel_hyp_gp}, the induced action $G\curvearrowright\operatorname{Tri}(Z)$ is cocompact if and only if the group $G$ is hyperbolic. 
Moreover, since the natural projection $\operatorname{Tri}(Z)\to \operatorname{Pair}(Z)$ defined by $(x,y,z)\mapsto (x,y)$, is continuous and surjective, every $3$-cocompact action is also $2$-cocompact.

\subsection{Metric Properties and Dimensions}
\label{subsec:metric_properties}

\begin{definition}[Uniformly perfect metric space] 
    A metric space $(Z,d_Z)$ is \emph{uniformly perfect} if there exists $C>1$ such that, for every $a\in Z$ and every $0<R\leq\operatorname{diam}(Z)$, we have $B(a,R)\setminus B\left(a,\frac{R}{C}\right)\neq\varnothing$. 
\end{definition} 

\begin{definition}[Doubling metric space] 
    A metric space $(Z,d_Z)$ is said to be \emph{$N$-doubling} if every ball of radius $R$ can be covered by at most $N$ balls of radius $R/2$. 
\end{definition} 

The boundary of a hyperbolic group is doubling. In contrast, the Bowditch boundary of a relatively hyperbolic group need not be doubling. The following result characterizes doubling of the Bowditch boundary in terms of the peripheral subgroups.

\begin{proposition}[{\cite[Proposition~4.5]{MS2020}}] 
    Let $(G,\mathcal H)$ be relatively hyperbolic. Then its Bowditch boundary is doubling if and only if every peripheral subgroup in $\mathcal H$ is virtually nilpotent. 
\end{proposition}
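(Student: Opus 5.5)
We must prove that the Bowditch boundary $Z=\partial(G,\mathcal H)$ is doubling if and only if every $H\in\mathcal H$ is virtually nilpotent. The plan is to localize the question at a single parabolic point, using the cusped-space model. Let $X$ be a Groves--Manning cusped space for $(G,\mathcal H)$: a Cayley graph of $G$ with combinatorial horoballs $\mathcal{H}(gH)$ glued along the cosets. Then $X$ is Gromov hyperbolic, $\partial X = Z$, and we put a visual metric $d$ on $Z$. Since parabolic points are fixed by quasi-M\"obius actions, it suffices to understand the metric near each parabolic point and away from them. Away from the parabolic points, cocompactness on $\operatorname{Pair}(Z)$ will be used to transfer local control: balls centered at conical points can be pushed by the action to a definite scale, where the geometry is controlled uniformly.

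\textbf{Step 1: model of $Z$ near a parabolic point.} Fix a parabolic point $p$ with stabilizer $H$, and apply a quasi-M\"obius (hence locally quasisymmetric) change of basepoint sending $p$ to $\infty$. We claim that $Z\setminus\{p\}$, with the parabolic visual quasi-metric based at $p$, is quasi-isometric to $H$ equipped with a metric of the form
\[
\rho(h,h') \asymp \exp\bigl(\epsilon\, d_{\mathcal H}(h,h')\bigr)\approx \text{horoball-height},
\]
that is, $\rho(1,h)\asymp d_H(1,h)^{\epsilon'}$ up to the standard horoball distance $2\log d_H$. This uses that geodesics in the combinatorial horoball between points of the coset travel to height about $\log_2 d_H(h,h')$. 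We also need that $H$ acts cocompactly on $Z\setminus\{p\}$ by isometries of this quasi-metric.

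\textbf{Step 2: doubling implies virtually nilpotent.} If $Z$ is doubling, then so is $Z\setminus\{p\}$ with the snowflaked word metric $d_H^{\epsilon'}$, up to quasisymmetry. Doubling is a quasisymmetric invariant for uniformly perfect spaces, and snowflaking preserves it. So $(H,d_H)$ is doubling at large scales. This gives $|B_H(R)|\le C R^{k}$, that is, polynomial growth, and Gromov's theorem then makes $H$ virtually nilpotent.

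\textbf{Step 3: virtually nilpotent implies doubling.} Conversely, by Bass--Guivarc'h, virtually nilpotent $H$ has uniform polynomial growth $|B(R)|\asymp R^{d}$, which makes $(H,d_H)$ coarsely doubling. This yields doubling of the parabolic neighbourhoods uniformly, since there are finitely many conjugacy classes. For balls around general points we combine two facts:
\begin{enumerate}[$(i)$]
\item A ball $B(x,r)$ either lies deep in a horoball shadow, where the horoball model from Step 1 applies, or corresponds to a bounded region of the Cayley graph.
\item In the latter case, the number of $r/2$-balls needed is bounded by the local finiteness of $G$ (bounded valence) together with hyperbolicity of $X$.
\end{enumerate}

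\textbf{Main obstacle.} I expect the hardest step to be the uniform case analysis in Step 3, especially controlling balls that straddle several horoballs at comparable scales. There I would count shadows of vertices of $X$ at distance about $-\log r$ from the basepoint, using thin triangles to bound how many horoballs can meet such a region.
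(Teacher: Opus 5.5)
The paper gives no proof of this proposition; it quotes it from Mackay--Sisto. So your outline has to stand on its own. Your Step~2 is essentially sound, but the direction ``virtually nilpotent $\Rightarrow$ doubling'' (your Step~3) is not proved. Doubling needs one constant for all balls at all scales, and none of your mechanisms for uniformity works as stated.

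\begin{itemize}
\item \emph{The conical/parabolic split is the wrong one.} A ball $B(\xi,r)$ is governed by the point $v$ at distance $t\approx\epsilon^{-1}\log(1/r)$ from the basepoint $o$ on a geodesic $[o,\xi)$ in the cusped space $X$. Conicality only says the ray returns to the thick part along some sequence of times. At other scales $v$ can be arbitrarily deep in a horoball.
\item \emph{The group action gives no uniform control there.} Elements of $G$ preserve depth in $X$, so no group element brings such a ball to a ``definite scale'' with controlled geometry. Moreover, the $G$-action on $Z$ is uniformly quasi-M\"obius but not uniformly quasisymmetric: an $\eta$-quasisymmetric family of self-maps of a compact space is equicontinuous, so it could never separate close pairs. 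Hence transferring doubling constants by group elements, or from $p$ to $gp$ via ``finitely many conjugacy classes'', needs a normalization you have not supplied.
\item \emph{The Step~1 model is circular for this purpose.} It describes $(Z\setminus\{p\},d_p)$ only at large scales, through an $H$-orbit. Deducing doubling near $p$ at all small scales therefore also needs small-scale doubling of a compact fundamental domain for $H\curvearrowright Z\setminus\{p\}$. That domain contains other parabolic points, so you are back where you started.
\item \emph{Bounded valence fails.} Your item (ii) invokes it, but $X$ does not have bounded valence: a vertex at depth $n$ in a combinatorial horoball over $H$ has about $|B_H(2^n)|$ neighbours.
\end{itemize}

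The missing idea is to do all the counting once, uniformly, in $X$. Show that $X$ has bounded growth at some scale: every ball of radius $R$ is covered by $N$ balls of radius $r_0$, with $N$ independent of the centre.
\begin{itemize}
\item For centres of bounded depth this follows from local finiteness of the Cayley graph.
\item For a centre at depth $n$ in a horoball, the count is comparable to $|B_H(2^{n+c})|/|B_H(2^{n-c})|$. This is bounded independently of $n$ precisely because Bass--Guivarc'h gives $|B_H(2R)|\le C\,|B_H(R)|$ for \emph{all} $R$; without polynomial growth it is unbounded.
\end{itemize}
Now $B(\xi,r)$ is comparable to the shadow of $v$. Covering it by balls of radius $r/2$ amounts to covering that shadow by shadows of points at distance $t+c$ from $o$ lying within bounded distance of $v$. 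So bounded growth gives doubling; this is Bonk--Schramm's theorem that a visual Gromov hyperbolic space with bounded growth at some scale has doubling boundary. It also disposes of your ``straddling'' balls automatically, since each ball is governed by a single point of $X$.

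For Step~2, only the phrasing needs correcting. Moving $p$ to $\infty$ is a metric inversion (quasi-M\"obius, not quasisymmetric), and inversions preserve doubling. An $H$-orbit in $(Z\setminus\{p\},d_p)$ is then doubling as a subset. Since $d_p\asymp d_H^{\epsilon'}$ on it at large scales, this yields $|B_H(2R)|\le N\,|B_H(R)|$ for large $R$, hence polynomial growth, and Gromov's theorem finishes.
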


\begin{definition}[Ahlfors regular metric measure space] \label{defn:Ahlfors_reg} 
    Let $(Z,d,\mu)$ be a metric measure space. For $Q>0$, we say that $(Z,d,\mu)$ is \emph{Ahlfors $Q$-regular} if there exists a constant $C\geq1$ such that for every $z\in Z$ and every $0<r\leq\operatorname{diam}(Z)$, we have
    \[ 
    C^{-1}r^Q \leq \mu(B(z,r)) \leq Cr^Q.
    \]     
\end{definition} 

By a result of Coornaert \cite{Coornaert1993}, the Gromov boundary of a hyperbolic group is Ahlfors regular with respect to a visual metric. In contrast, for a relatively hyperbolic group, a visual metric on the Bowditch boundary need not be Ahlfors regular; \cite[Proposition~1.6]{BM1996}, \cite[Example~1, p.~189]{GP2001}. 

\begin{definition}[Ahlfors regular conformal dimension] \label{defn:conf_dim}
    Let $(Z,d_Z)$ be a metric space. The Ahlfors regular conformal dimension of $Z$ is the infimal Hausdorff dimension of all Ahlfors regular metric spaces quasisymmetrically homeomorphic to $Z$.
    
\end{definition}

 
\subsection{Quasi-M\"{o}bius and Quasisymmetric Maps}
\leavevmode

\begin{definition}
    Let $(Z,d)$ be a metric space. The \emph{cross-ratio} \([z_1,z_2,z_3,z_4]\) of four distinct points $z_1,z_2,z_3,z_4$ in $Z$ is defined by
    \[
    [z_1,z_2,z_3,z_4]
    :=
    \frac{d(z_1,z_3)\, d(z_2,z_4)}
     {d(z_1,z_4)\, d(z_2,z_3)}.
    \]
    
    Let $\eta:[0,\infty)\to[0,\infty)$ be a homeomorphism, and let \(f:(X,d_X)\to(Y,d_Y)\) be an injective map between metric spaces. 
    The map $f$ is called \emph{$\eta$-quasi-M\"{o}bius} if for every four-tuple $(x_1,x_2,x_3,x_4)$ of distinct points in $X$,
    \[
    [f(x_1),f(x_2),f(x_3),f(x_4)]
    \leq
    \eta\,\bigl([x_1,x_2,x_3,x_4]\bigr).
    \]
    The map $f$ is called \emph{$\eta$-quasisymmetric} if for every triple $(x_1,x_2,x_3)$ of distinct points in $X$,
    \[
    \frac{d_Y\bigl(f(x_1),f(x_2)\bigr)}
     {d_Y\bigl(f(x_1),f(x_3)\bigr)}
    \leq \eta\!\left( \frac{d_X(x_1,x_2)} {d_X(x_1,x_3)} \right). 
    \]
\end{definition}

Note that Quasi-M\"{o}bius and quasisymmetric maps are homeomorphisms onto their images. The composition of two quasi-M\"{o}bius maps with distortion functions $\eta_1$ and $\eta_2$ is $\eta_2\circ\eta_1$-quasi-M\"{o}bius; similarly, compositions of quasisymmetric maps are quasisymmetric. Moreover, every $\eta$-quasisymmetric map is $\widetilde{\eta}$-quasi-M\"{o}bius, where $\widetilde{\eta}$ depends only on $\eta$. Conversely, every quasi-M\"{o}bius map between bounded metric spaces is quasisymmetric.

 
\subsection{Modulus of Path Families and Loewner Spaces}
\leavevmode

\begin{definition}[Admissible densities and modulus of a path family]
    Let $(Z,d,\mu)$ be a metric measure space, and $\Gamma$ denotes a family of paths in $Z$.
    
    A \emph{density} on $Z$ is a Borel measurable function \(\rho : Z \to [0,\infty].\) A density $\rho$ is said to be \emph{admissible} for $\Gamma$ if \(\int_{\gamma} \rho \, ds \geq 1\) for every rectifiable path $\gamma \in \Gamma$, where the integral is taken with respect to arc length along $\gamma$. 
    
    For $Q \geq 1$, the \emph{$Q$-modulus} of the path family $\Gamma$ is defined by 
    \begin{equation}\label{eq:modulus}
        \operatorname{Mod}_Q(\Gamma)
        :=
        \inf_{\rho}
        \int_Z \rho^{Q}\, d\mu,
    \end{equation}
    where the infimum is taken over all admissible densities $\rho : Z \to [0,\infty]$ for $\Gamma$.
\end{definition}

The modulus is well behaved under quasisymmetric and quasi-M\"{o}bius maps between Ahlfors regular spaces. The following theorem, due to Tyson, establishes the coarse-invariance of the \(Q\)-modulus of path families under quasisymmetric or quasi-M\"{o}bius homeomorphisms between Ahlfors \(Q\)-regular metric measure spaces, which will be used later.

\begin{theorem}[{\cite{Tyson1998,Tyson2001,BK2005}}]\label{thm_Tyson}
    Let $(X,d_X,\mu_X)$ and $(Y,d_Y,\mu_Y)$ be Ahlfors $Q$-regular locally compact metric measure spaces with $Q\geq 1$, and $\eta:(0,\infty) \to (0,\infty)$ be an increasing homeomorphism. Suppose that \(f:X\to Y\) is an $\eta$-quasisymmetric (or quasi-M\"{o}bius) homeomorphism. Then there exists a constant $K\geq 1$, depending only on $X$, $Y$, and $\eta$, such that for every family $\Gamma$ of paths in $X$,
    \[
    \frac{1}{K}\,\operatorname{Mod}_Q(\Gamma)
    \leq
    \operatorname{Mod}_Q(f\circ \Gamma)
    \leq
    K\,\operatorname{Mod}_Q(\Gamma),
    \]
    where \(f\circ \Gamma = \{\,f\circ\gamma : \gamma\in\Gamma\,\}.\)
\end{theorem}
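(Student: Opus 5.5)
The plan is to prove Theorem~\ref{thm_Tyson} in two stages. First I handle the quasisymmetric case for Ahlfors $Q$-regular spaces. Then I reduce the quasi-M\"{o}bius case to it.

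By symmetry it suffices to prove one inequality, $\operatorname{Mod}_Q(f\circ\Gamma)\geq K^{-1}\operatorname{Mod}_Q(\Gamma)$. The reverse inequality then follows by applying the same argument to $f^{-1}$, which is $\eta'$-quasisymmetric with $\eta'(t)=1/\eta^{-1}(1/t)$.

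Fix an admissible density $\rho'$ for $f\circ\Gamma$. I will pull it back to a density $\rho$ on $X$ that is admissible for $\Gamma$, up to a constant factor, and satisfies $\int_X\rho^Q\,d\mu_X\lesssim\int_Y\rho'^Q\,d\mu_Y$. The natural choice is $\rho(x)=\rho'(f(x))\,L_f(x)$, where
\[
L_f(x)=\limsup_{r\to 0}\frac{\sup_{d(x,y)\le r} d_Y(f(x),f(y))}{r}
\]
is the pointwise Lipschitz constant, suitably replaced by a maximal-function version so that one can integrate along paths. Two facts are needed.

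\begin{enumerate}[$(a)$]
\item Quasisymmetric maps between Ahlfors $Q$-regular spaces are absolutely continuous in measure. They satisfy $\mu_Y(f(B))\asymp \operatorname{diam}(f(B))^Q$ for balls $B$, and the Jacobian $J_f=d(f^*\mu_Y)/d\mu_X$ satisfies $L_f^Q\lesssim J_f$ almost everywhere. This comes from the fact that quasisymmetric images of balls are uniformly round, combined with Ahlfors regularity and a Vitali covering argument. The change of variables $\int_X(\rho'\circ f)^QJ_f\,d\mu_X=\int_Y\rho'^Q\,d\mu_Y$ then gives the energy bound.
\item Admissibility along paths. For $\gamma\in\Gamma$ one needs $\int_{f\circ\gamma}\rho'\,ds\lesssim\int_\gamma\rho\,ds$. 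This can fail on an exceptional subfamily where $f\circ\gamma$ is not absolutely continuous. I would show that this subfamily has $Q$-modulus zero, by a Fuglede-type argument: the set where $f$ maps null sets to positive-measure sets is controlled by $(a)$.
\end{enumerate}

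Step $(b)$ is the main obstacle. Tyson avoids it by working with a discretized, or \emph{coarse}, modulus. One replaces $\rho$ by sums over Whitney-type ball covers. Roundness of quasisymmetric images of balls, with eccentricity controlled by $\eta$, transfers covers between $X$ and $Y$ with bounded overlap, which the doubling property ensures. One then shows that the continuous modulus and the coarse modulus are comparable in Ahlfors regular spaces. I would follow this route: define $\rho(x)=\sum_B \big(\operatorname{diam}f(B)/\operatorname{diam}B\big)\,\big(\operatorname{avg}_{f(B)}\rho'\big)\,\chi_{2B}(x)$ over a cover at each scale. Admissibility would be checked by covering $\gamma$ by chains of balls, and the energy bound by H\"{o}lder's inequality together with $\mu(f(B))\asymp\operatorname{diam}(f(B))^Q$.

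Finally, for a quasi-M\"{o}bius $f$, fix a point $p\in X$ and apply metric inversions at $p$ and at $f(p)$. These preserve Ahlfors $Q$-regularity locally away from the base point and turn $f$ into a quasisymmetric map between local pieces. Modulus is local and countably subadditive, so the estimate passes back to $f$. In the bounded case this is immediate, since a quasi-M\"{o}bius map between bounded spaces is quasisymmetric.

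The constant $K$ depends only on $\eta$ and the Ahlfors regularity constants of $X$ and $Y$.
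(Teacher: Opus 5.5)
The paper gives no proof of this statement. It is quoted from Tyson (the quasisymmetric case, proved there for $Q>1$) and from Bonk--Kleiner (the quasi-M\"obius case), so the benchmark is Tyson's argument. Your outline has its skeleton: reduction to one inequality via $f^{-1}$, a discrete pull-back of $\rho'$, and an energy bound from $(\operatorname{diam} f(B))^Q\lesssim\mu_Y(f(B))$, H\"older and bounded overlap, which is fine at a single scale.

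There is, however, a genuine gap at the step you dispose of in one sentence, admissibility ``by covering $\gamma$ by chains of balls''. As stated, this step is false. For the density $\rho_r$ built at a fixed scale $r$, a chain only yields $\int_\gamma\rho_r\,ds\gtrsim\sum_k\operatorname{diam} f(B_k)\operatorname{avg}_{f(B_k)}\rho'$. Volume averages of $\rho'$ do not control its line integral along $f\circ\gamma$. For example, take $f=\mathrm{id}$ on $\mathbb R^2$ and $\rho'=\chi_{[0,1]\times[0,\delta]}$, which is admissible for the horizontal unit segments in that strip; then $\int_\gamma\rho_r\,ds\lesssim\delta/r\ll1$ when $r\gg\delta$. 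Worse, if $\rho'=+\infty$ on the $\mu_Y$-null trace of a single rectifiable curve $f\circ\gamma$ and $0$ elsewhere, then $\rho_r\equiv0$ at every scale. Summing over all scales instead of taking a limit would destroy the energy bound.

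What must be supplied is the following.
\begin{enumerate}[(i)]
\item Replace $\rho'$ by a lower semicontinuous majorant with almost the same $L^Q$-norm (Vitali--Carath\'eodory).
\item Prove only the per-curve statement $\liminf_{r\to0}\int_\gamma\rho_r\,ds\ge c\int_{f\circ\gamma}\rho'\,ds$, using that the chain produces inscribed polygons of $f\circ\gamma$ whose mesh tends to $0$.
\item Convert this into $\operatorname{Mod}_Q(\Gamma)\le c^{-Q}\liminf_{r\to0}\|\rho_r\|_{L^Q}^Q$ by weak compactness in $L^Q$, Mazur's lemma and Fuglede's lemma. This is where $Q>1$ enters. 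That your outline never uses this hypothesis is a symptom of the omission.
\item Handle curves $\gamma\in\Gamma$ with $f\circ\gamma$ non-rectifiable, which are not constrained by the admissibility of $\rho'$ at all. Add to $\rho'$ a strictly positive lower semicontinuous function of small $L^Q$-norm, so that the discrete sums along such curves diverge.
\end{enumerate}

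Several auxiliary claims also need correcting.

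First, quasisymmetric maps between Ahlfors $Q$-regular spaces need not be absolutely continuous. Let $\mu_p$ be the Bernoulli measure on $\{0,1\}^{\mathbb N}$ and $[x\wedge y]$ the cylinder of the common prefix of $x$ and $y$. For $0<p<1$, $p\neq\frac12$, the identity from $\bigl(\{0,1\}^{\mathbb N},\mu_{1/2}([x\wedge y])^{1/Q},\mu_{1/2}\bigr)$ to $\bigl(\{0,1\}^{\mathbb N},\mu_p([x\wedge y])^{1/Q},\mu_p\bigr)$ is quasisymmetric between Ahlfors $Q$-regular spaces, while $\mu_p\perp\mu_{1/2}$. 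What does hold is $\int_X(\rho'\circ f)^QJ_f\,d\mu_X\le\int_Y\rho'^Q\,d\mu_Y$, and that is all the energy bound needs.

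Second, comparability of continuous and combinatorial modulus is established under Loewner-type hypotheses, not for arbitrary Ahlfors regular spaces. Your construction does not use it anyway.

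Third, in the quasi-M\"obius reduction, splitting $\Gamma=\bigcup_j\Gamma_j$ into local pieces and using countable subadditivity only gives $\operatorname{Mod}_Q(f\circ\Gamma)\le K\sum_j\operatorname{Mod}_Q(\Gamma_j)$. Since $\sum_j\operatorname{Mod}_Q(\Gamma_j)$ is not bounded by a multiple of $\operatorname{Mod}_Q(\Gamma)$, no uniform constant results. There are two ways to fix this:
\begin{enumerate}[(a)]
\item Exploit locality inside the density construction. It only uses quasisymmetry on small balls, where a quasi-M\"obius map is $\eta'$-quasisymmetric with $\eta'$ depending only on $\eta$.
\item Invert globally at $p$ and $f(p)$, which makes $f$ quasisymmetric between Ahlfors $Q$-regular spaces. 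Then use that inversion quasi-preserves the $Q$-modulus of curves avoiding $p$ (densities transform by $\rho\mapsto\rho\, d(\cdot,p)^2$), while the curves through $p$ form a family of $Q$-modulus zero, again because $Q>1$.
\end{enumerate}
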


\begin{definition}[Relative distance and Loewner spaces]
    Let $(Z,d,\mu)$ be a metric measure space. For subsets $E,F\subseteq Z$ of positive diameter, their \emph{relative distance} is defined by 
    \[
    \Delta(E,F)
    :=
    \frac{\operatorname{dist}(E,F)}
    {\min\{\operatorname{diam}(E),\operatorname{diam}(F)\}}.
    \]

    We denote by $\Gamma(E,F)$ the family of all paths in $Z$ joining $E$ to $F$. Suppose that $Z$ is connected and let $Q\geq 1$. We say that $(Z,d,\mu)$ is a \emph{$Q$-Loewner space} if there exists a positive decreasing function \(\Psi:(0,\infty)\to(0,\infty)\) such that 
    \begin{equation}\label{eq:Loewner_condition}
        \operatorname{Mod}_Q\bigl(\Gamma(E,F)\bigr)
        \geq
        \Psi\bigl(\Delta(E,F)\bigr),
    \end{equation}
    whenever $E$ and $F$ are disjoint continua in $Z$.
\end{definition}

The following criterion of Bonk and Kleiner asserts that a space that satisfies a Loewner-type condition for pairs of balls also satisfies the Loewner condition for all pairs of continua. 

\begin{proposition}[{\cite[Prpoposition~3.1]{BK2005}}] \label{prop_Loewner_condition_with_ball}
    Let $(Z,d,\mu)$ be a complete metric measure space. Suppose that for every $C>0$, there exist constants $m = m(C) > 0$ and $L = L(C) > 0$ such that, whenever $R > 0$ and $B, B' \subset Z$ are balls of radius $R$ satisfying $\operatorname{dist}(B, B_0) \le C R$, the family $\{ \gamma \in \Gamma(B, B_0) :  \operatorname{length}(\gamma) \le L R \}$ has $Q$-modulus at least $m$. Then $(Z,d,\mu)$ is a $Q$-Loewner space.
\end{proposition}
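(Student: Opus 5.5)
The plan is a chaining argument driven by an auxiliary ``$\rho$-distance'' function. Two points about the statement first. I read the typographical $B_0$ as $B'$ throughout. Since the conclusion concerns disjoint continua, I also use that $Z$ is connected (implicit in the definition of a Loewner space).

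Fix disjoint continua $E,F$ with $\delta=\min\{\operatorname{diam}E,\operatorname{diam}F\}$ and $\Delta=\Delta(E,F)$. Let $\rho$ be admissible for $\Gamma(E,F)$. Define
\[
u(z)=\min\Bigl\{1,\ \inf_\gamma \int_\gamma \rho\,ds\Bigr\},
\]
where the infimum runs over rectifiable paths from $E$ to $z$. Then $u=0$ on $E$ and $u=1$ on $F$, by admissibility. Moreover, for any rectifiable path $\alpha$ from $a$ to $b$,
\[
|u(a)-u(b)|\le \int_\alpha\rho\,ds .
\]
The goal is to show $\int_Z\rho^Q\,d\mu\ge \Psi(\Delta)$ for a function $\Psi$ depending only on $\Delta$, $m(\cdot)$ and $L(\cdot)$.

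The basic step is as follows. Let $B,B'$ be balls of radius $R$ with $\operatorname{dist}(B,B')\le CR$. Every path in the family $\Gamma_{L}(B,B')$ of paths from $B$ to $B'$ of length at most $LR$ stays inside the ball $\widehat B=B(\text{center of }B,(C+L+2)R)$. Suppose
\[
\int_{\widehat B}\rho^Q\,d\mu< m(C)\,t^Q .
\]
Then $\rho/t$, restricted to $\widehat B$, is not admissible for $\Gamma_{L}(B,B')$. Hence some path of that family has $\rho$-length less than $t$, and so
\[
\inf_{B}u \le \sup_{B'}u + t \quad\text{and}\quad \inf_{B'}u \le \sup_{B}u + t .
\]
This is an oscillation estimate linking $B$ and $B'$ whenever $\rho$ has small local $Q$-energy.

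Next comes the chaining. Choose $x\in E$ and $y\in F$. Using that $E$ and $F$ are connected with diameter at least $\delta$, produce two chains of balls of comparable radii. One runs from tiny balls meeting $E$ up to a ball of radius about $\delta$. The other runs from there, through a bounded number $N(\Delta)$ of balls of radius about $\delta$ (consecutive ones at relative distance at most $C(\Delta)$), to tiny balls meeting $F$. The fine chains use dyadic radii $2^{-k}\delta$ centered on points of $E$ (respectively $F$). Connectedness guarantees that a ball of radius $2^{-k}\delta$ centered on $E$ meets $E$ all the way to its boundary sphere, which lets us pick centers so that consecutive balls are at bounded relative distance. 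Now suppose the energy in every enlarged ball $\widehat B_k$ is at most $m\,t_k^Q$, with
\[
t_k = c\,2^{-\epsilon k}\ \text{on the fine parts}\quad\text{and}\quad t_k=c/N\ \text{on the coarse part},
\]
where $\sum t_k<1/2$. Telescoping the basic step along the chain forces $u$ to take values near $0$ near $E$ and near $1$ near $F$ with total change less than $1$, a contradiction. Consequently some $\widehat B_k$ carries energy at least $m t_k^Q$.

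The main obstacle is that $u$ is only lower semicontinuous and the short paths between consecutive balls need not concatenate. I will handle this by working with $\inf$ and $\sup$ of $u$ over the balls, as above, rather than pointwise values. I will also have to ensure the smallest balls actually meet $E$ and $F$, so that $\inf u=0$ and $\sup u=1$ there. A second difficulty is that the bound $m t_k^Q$ decays along the fine chains, so one large ball alone does not give a uniform lower bound. To overcome this I will run the fine chains from many points of $E$ simultaneously, namely about $2^{k}$ well-separated centers at level $k$, which exist because $E$ has diameter at least $\delta$ and $Z$ is Ahlfors regular and hence doubling. Bounded overlap of the enlarged balls $\widehat B$ then lets the energies add up. Choosing $\epsilon<1/Q$ makes the summed lower bound $\gtrsim m\sum_k 2^{k}2^{-\epsilon kQ}$ dominate. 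Altogether this gives $\int\rho^Q\ge\Psi(\Delta)>0$, and $\Psi$ can be taken decreasing in $\Delta$.
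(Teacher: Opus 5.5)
The paper does not prove this proposition; it quotes it from Bonk--Kleiner. So I judge your argument on its own terms.

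\textbf{Where the argument breaks.} Your basic step is correct: if $\int_{\widehat B}\rho^Q\,d\mu<m\,t^Q$, then some path in $\Gamma_L(B,B')$ has $\rho$-length $<t$. Hence $|u(a)-u(b)|<t$ for \emph{some} $a\in B$ and \emph{some} $b\in B'$. The failure is in the chaining. The inequalities $\inf_B u\le\sup_{B'}u+t$ and $\inf_{B'}u\le\sup_B u+t$ do not telescope. The good path from $B_j$ to $B_{j+1}$ starts at a point you do not control, which in general is not where the good path from $B_{j-1}$ landed. In fact, all of your inequalities hold when $\inf_{B_j}u=0$ and $\sup_{B_j}u=1$ for every ball of the chain. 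That is fully consistent with $\inf_{B_0}u=0$ and $\sup_{B_n}u=1$, so no contradiction follows, however small $\sum_j t_j$ is. To chain, you would need something like $\sup_{B_{j+1}}u\le\inf_{B_j}u+t_j$, i.e.\ control of the oscillation of $u$ on each ball. Nothing in the hypothesis gives this: a lower bound on the modulus of $\Gamma_L(B,B')$ guarantees that good paths exist, but says nothing about where they start or end. Passing to $\inf$ and $\sup$ over balls is exactly where the information is lost. So this is the heart of the matter, not the semicontinuity technicality you flag.

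\textbf{What is missing.} In the standard proof that a $(1,Q)$-Poincar\'e inequality implies the Loewner property (Heinonen--Koskela), the $\rho$-distance function $u$ is used essentially as you use it. The chaining, however, is done with ball averages $u_B$. For neighbouring balls of comparable radius, $|u_B-u_{B'}|$ is bounded by the $L^Q$-norm of $\rho$ on an enlarged ball, and that is precisely the content of the Poincar\'e inequality. Your proposal lacks any mechanism that turns the ball-modulus hypothesis into such mean-value control. Two possible routes are proving a Poincar\'e inequality from the hypothesis and then invoking Heinonen--Koskela, or spreading the good paths over a whole ball rather than finding them one at a time.

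\textbf{Two smaller problems.}
\begin{enumerate}
\item Your final count $\gtrsim m\sum_k 2^k2^{-\epsilon kQ}$ diverges when $\epsilon<1/Q$, so the bookkeeping is off. The contradiction yields only one heavy ball per chain, at a chain-dependent scale $r_x$. The correct accounting is a Vitali covering of $E$ by such balls, together with $\sum_i r_i\gtrsim\operatorname{diam}E$ (from connectedness) and $(r_i/\delta)^{\epsilon Q}\ge r_i/\delta$.
\item You use Ahlfors regularity (hence doubling), which is not among the hypotheses as stated here, although it does hold in the paper's application.
\end{enumerate}
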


\section{Cocompact Actions and Their Dynamics}

In this section, we study the consequences of cocompactness of the induced action on the space of distinct pairs. We first characterize cocompactness in terms of a uniform separation property. We then establish some technical lemmas, which will be used in later sections

Let $(Z,d_Z)$ be a compact metric space. We denote by $\operatorname{Pair}(Z)$ the space of distinct ordered pairs in $Z$, that is, $\operatorname{Pair}(Z) = \{(z_1,z_2)\in Z^2:z_1\neq z_2\}$.
An action $G\curvearrowright Z$ induces an action on $\operatorname{Pair}(Z)$ by $g\cdot(z_1,z_2)=(gz_1,gz_2)$. We say that the action $G\curvearrowright\operatorname{Pair}(Z)$ is \emph{cocompact} if there exists a compact set $K\subset\operatorname{Pair}(Z)$ such that
\[
\operatorname{Pair}(Z)=GK=\bigcup_{g\in G}gK.
\]

\begin{lemma}[Uniform separation characterization]
\label{lem:cocompact_uniformly_separated}
    The action $G\curvearrowright\operatorname{Pair}(Z)$ is cocompact if and only if there exists $\delta>0$ such that, for every $(z_1,z_2)\in\operatorname{Pair}(Z)$, there exists $g\in G$ satisfying $d_Z(gz_1,gz_2)\geq\delta$.
\end{lemma}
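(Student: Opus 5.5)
Both directions are compactness arguments in $\operatorname{Pair}(Z)$, where $Z$ is a compact metric space with its induced metric. We do not need any continuity of the action beyond each $g$ being a homeomorphism. In fact, the backward direction uses only that $G$ acts by bijections.

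\emph{($\Rightarrow$)} Suppose $\operatorname{Pair}(Z)=GK$ with $K\subset\operatorname{Pair}(Z)$ compact. The function $(z_1,z_2)\mapsto d_Z(z_1,z_2)$ is continuous and strictly positive on $\operatorname{Pair}(Z)$. It therefore attains a positive minimum $\delta$ on the compact set $K$. Given any $(z_1,z_2)\in\operatorname{Pair}(Z)$, write $(z_1,z_2)=h\cdot k$ with $h\in G$ and $k\in K$. Then $g=h^{-1}$ satisfies $g\cdot(z_1,z_2)=k\in K$, so $d_Z(gz_1,gz_2)\ge\delta$.

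\emph{($\Leftarrow$)} Given $\delta>0$ as in the statement, set
\[
K_\delta:=\{(z_1,z_2)\in Z^2 : d_Z(z_1,z_2)\ge\delta\}.
\]
This is a closed subset of the compact space $Z\times Z$, hence compact. It is contained in $\operatorname{Pair}(Z)$ and is also compact in the subspace topology. For any $(z_1,z_2)\in\operatorname{Pair}(Z)$, the hypothesis gives $g$ with $g\cdot(z_1,z_2)\in K_\delta$. Hence $(z_1,z_2)\in g^{-1}K_\delta$, which shows $\operatorname{Pair}(Z)=GK_\delta$.

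There is no real obstacle. The only point needing care is that compactness of $K$ must be taken in $\operatorname{Pair}(Z)$, i.e.\ away from the diagonal, since this is what guarantees the positive minimum $\delta$. Conversely, $K_\delta$ must be checked to avoid the diagonal, which it does because $\delta>0$.
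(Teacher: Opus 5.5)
Your proof is correct and matches the paper's argument essentially step for step. In the forward direction you take $\delta$ to be the positive minimum of $d_Z$ on the compact set $K$. In the backward direction you show $\operatorname{Pair}(Z)=GK_\delta$, where $K_\delta=\{(z_1,z_2)\in Z^2 : d_Z(z_1,z_2)\ge\delta\}$ is closed in $Z^2$ and avoids the diagonal.
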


\begin{proof}
    Suppose first that $G\curvearrowright\operatorname{Pair}(Z)$ is cocompact. Then there exists a compact set $K\subset\operatorname{Pair}(Z)$ such that
    \[
    \operatorname{Pair}(Z)=GK.
    \]
    The function $d_Z: Z\times Z\to\mathbb R$ is continuous and is strictly positive on $K$. Since $K$ is compact, there exists
    \[
    \delta:=\min\{d_Z(x,y):(x,y)\in K\}>0.
    \]
    Given any $(z_1,z_2)\in\operatorname{Pair}(Z)$, the equality $\operatorname{Pair}(Z)=GK$ gives some $g\in G$ such that $(gz_1,gz_2)\in K$. Consequently, $d_Z(gz_1,gz_2)\geq\delta$. Thus every pair can be mapped by an element of $G$ to a $\delta$-separated pair.

    Conversely, suppose that there exists $\delta>0$ such that every $(z_1,z_2)\in\operatorname{Pair}(Z)$ can be mapped by some element of $G$ to a pair whose distance is at least $\delta$. We define 
    \[
    K_\delta = \{(z_1,z_2)\in Z^2:d_Z(z_1,z_2)\geq\delta\}.
    \]
    Since $d_Z$ is continuous, $K_\delta$ is closed in the compact space $Z^2$ and hence is compact. Moreover, since $\delta>0$, $K_\delta\subset\operatorname{Pair}(Z)$. By assumption, for every $(z_1,z_2)\in\operatorname{Pair}(Z)$ there exists $g\in G$ such that $(gz_1,gz_2)\in K_\delta$. Equivalently, $(z_1,z_2)\in g^{-1}K_\delta$, and hence $\operatorname{Pair}(Z) = GK_\delta$. Therefore, the action $G\curvearrowright\operatorname{Pair}(Z)$ is cocompact.
\end{proof}

The following lemma provides a criterion for obtaining a quasisymmetric limit of a sequence of uniformly quasisymmetric maps whose domains converge to the ambient compact space in the Hausdorff sense. Note that, in general, a uniform limit of quasisymmetric maps can be constant. For example, on the compact interval $[0,1]$, the maps
\[
f_k:[0,1]\longrightarrow[0,1],
\qquad
f_k(x)=\frac{x}{k},
\]
are quasisymmetric and converge uniformly to the constant map $f\equiv0$. Thus, the uniform limit of quasisymmetric maps need not be quasisymmetric without a suitable nondegeneracy condition.

\begin{lemma}
\label{lem:tech_1}
    Let $(X,d_X)$ and $(Y,d_Y)$ be compact metric spaces, and let $f_k:D_k\to Y$ be uniformly $\eta$-quasisymmetric, where $D_k\subset X$. Suppose there exist $x_k^1,x_k^2\in D_k$ and $y_k^1,y_k^2\in Y$ such that $f_k(x_k^i)=y_k^i$ and $d_X(x_k^1,x_k^2) \geq \delta$, $d_Y(y_k^1,y_k^2)\ge\delta$, for some $\delta>0$ independent of $k$. If $\operatorname{dist}_H(D_k,X)\to0$, then:
    \begin{enumerate}[$(1)$]
        \item ${f_k}$ has a subsequence converging uniformly to a quasisymmetric map $f\to Y$.

        \item If, in addition, $\operatorname{dist}_H(f_k(D_k),Y)\to0$, then $f$ is a quasisymmetric homeomorphism.
    \end{enumerate}
\end{lemma}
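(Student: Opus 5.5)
The plan is an Arzelà--Ascoli argument, made possible by an equicontinuity estimate that comes from the two $\delta$-separated normalizing points. We work on the domains $D_k$ and then extend to $X$ using $\operatorname{dist}_H(D_k,X)\to0$.

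\textbf{Step 1: uniform equicontinuity.} Fix $x,x'\in D_k$ with $d_X(x,x')=t$ small, say $t<\delta/2$. By the triangle inequality, one of $x_k^1,x_k^2$, call it $x_k^{i}$, satisfies $d_X(x,x_k^{i})\ge\delta/2$. Similarly, at least one of $y_k^1,y_k^2$ is at distance $\ge\delta/2$ from $f_k(x)$. We need the same index for both. To arrange this, we compare to whichever of the two is far in the image and bound its preimage distance from below by $\operatorname{diam}$-type arguments; alternatively we run the estimate with both indices and use $\operatorname{diam} Y<\infty$. Quasisymmetry then gives
\[
d_Y(f_k(x),f_k(x'))\le \eta\!\left(\frac{2t}{\delta}\right)\operatorname{diam}(Y).
\]
This bound tends to $0$ as $t\to0$, uniformly in $k$. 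So the family $\{f_k\}$ is uniformly equicontinuous with a common modulus $\omega(t)=\operatorname{diam}(Y)\,\eta(2t/\delta)$.

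\textbf{Step 2: extraction of a limit.} Each $f_k$ extends uniquely to the closure $\overline{D_k}$. We choose a countable dense set $\{q_j\}\subset X$, and for each $j$ points $p_{k,j}\in D_k$ with $p_{k,j}\to q_j$; this is possible by the Hausdorff convergence. By compactness of $Y$ and a diagonal argument, $f_k(p_{k,j})$ converges for every $j$. The common modulus $\omega$ then shows that the limit defines a uniformly continuous map on $\{q_j\}$, which extends to $f:X\to Y$. It also gives $\sup_{x\in D_k}d_Y(f_k(x),f(x))\to0$ along the subsequence; this is the meaning of ``uniform convergence'' here.

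\textbf{Step 3: quasisymmetry and nondegeneracy.} Passing to a further subsequence, $x_k^i\to x^i$ and $y_k^i\to y^i$, with $d(x^1,x^2)\ge\delta$, $d(y^1,y^2)\ge\delta$ and $f(x^i)=y^i$. So $f$ is not constant. The main point, and the expected obstacle, is injectivity. Suppose $f(a)=f(b)$ with $a\ne b$. Pick $c\in\{x^1,x^2\}$ with $f(c)\ne f(a)$, and approximate $a,b,c$ by points $a_k,b_k,c_k\in D_k$. Applying the quasisymmetry of $f_k$ to the triple $(a_k,c_k,b_k)$ gives
\[
\frac{d(f_k a_k,f_k c_k)}{d(f_k a_k,f_k b_k)}\le\eta\!\left(\frac{d(a_k,c_k)}{d(a_k,b_k)}\right).
\]
The right side stays bounded, while the left side tends to $\infty$, a contradiction. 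So $f$ is injective. For any distinct triple in $X$, the quasisymmetry inequality for $f$ with the same $\eta$ follows by passing to the limit in the inequalities for approximating triples in $D_k$; all distances involved converge and the denominators stay positive by injectivity. This gives (1).

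\textbf{Step 4: surjectivity for (2).} Given $y\in Y$, choose $z_k\in D_k$ with $f_k(z_k)\to y$. Pass to a subsequence with $z_k\to z$. Uniform convergence together with equicontinuity gives $f(z)=y$. A continuous bijection from the compact space $X$ to $Y$ is a homeomorphism, and it is quasisymmetric by (1).
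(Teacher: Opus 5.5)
Your proof is correct. Part (1) follows the paper closely. You use the same equicontinuity bound $d_Y(f_k(x),f_k(x'))\le\operatorname{diam}(Y)\,\eta\bigl(2d_X(x,x')/\delta\bigr)$. You extract the limit by an Arzel\`a--Ascoli argument, carried out by hand with a diagonal sequence where the paper simply cites the varying-domain version. Nondegeneracy comes from the limits $x^i$, $y^i$, exactly as in the paper. You then use the same third-point contradiction to rule out $f(a)=f(b)$ before passing to the limit in the quasisymmetry inequality.

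One simplification in Step 1: you need no image-side separation and no matching index. Once $d_X(x,a)\ge\delta/2$ for some $a\in\{x_k^1,x_k^2\}$, the trivial bound $d_Y(f_k(x),f_k(a))\le\operatorname{diam}(Y)$ suffices, and this is exactly what the paper does. Your first suggested fix (``bound its preimage distance from below by diam-type arguments'') is not justified as stated and is unnecessary, so keep only the second.

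For part (2) you take a different and more economical route than the paper.
\begin{itemize}
\item You observe that the argument in (1) already gives injectivity of $f$.
\item You get surjectivity directly from $\operatorname{dist}_H(f_k(D_k),Y)\to0$ and compactness of $X$.
\item You conclude that a continuous bijection from compact $X$ to Hausdorff $Y$ is a homeomorphism. Quasisymmetry of $f^{-1}$ follows from the standard fact that inverses of quasisymmetric homeomorphisms are quasisymmetric.
\end{itemize}
The paper instead applies (1) a second time, to the inverse maps $g_k=f_k^{-1}:f_k(D_k)\to X$. This requires checking that they satisfy the same hypotheses with the roles of the $x_k^i$ and $y_k^i$ swapped. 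It then extracts a limit $g$ and verifies $g\circ f=\mathrm{id}_X$ and $f\circ g=\mathrm{id}_Y$.

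The paper's route exhibits $f^{-1}$ explicitly as a uniform limit of the $f_k^{-1}$. Yours avoids the second extraction and makes clear that the extra hypothesis in (2) is needed only for surjectivity.
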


\begin{proof}
    $(1)$ First, we show that $\{f_k:D_k\to Y\}$ is uniformly equicontinuous. Fix $k\in\mathbb N$ and $x,y\in D_k$. Since $d_X(x_k^1,x_k^2)\ge\delta$, there exists $a_k\in\{x_k^1,x_k^2\}$ such that $d_X(x,a_k)\ge\delta/2$. By the $\eta$-quasisymmetry of $f_k$, we have
    \begin{align*}
        & \frac{d_Y(f_k(x),f_k(y))}{d_Y(f_k(x),f_k(a))} \leq \eta\left( \frac{d_X(x,y)}{d_X(x,a)}\right) \leq \eta\left( \frac{2d_X(x,y)}{\delta}\right) \\
        \implies & d_Y(f_k(x),f_k(y)) \leq \mathrm{diam}(Y)\cdot \eta\left( \frac{2d_X(x,y)}{\delta}\right)
    \end{align*} 
    The right-hand side is independent of $k$ and tends to $0$ as $d_X(x,y)\to 0$. Thus, $\{f_k\}$ is uniformly equicontinuous. 

    Since $\operatorname{dist}_H(D_k, X)\to0$ and $Y$ is compact, the Arzel\`a-Ascoli theorem for maps with varying domains yields, after passing to a subsequence, a continuous map $f:X \to Y$ such that $f_k\to f$ uniformly on $D_k$.

    We now show that $f$ is nonconstant. Since $X$ and $Y$ are compact, after passing to a further subsequence, we may assume $x_k^i\to x^i\in X$, $y_k^i\to y^i\in Y$, for $i=1,2$. Because $d_X(x_k^1,x_k^2)\geq\delta$ and $d_Y(y_k^1,y_k^2)\geq\delta$, we have $d_X(x^1,x^2)\geq\delta$ and $d_Y(y^1,y^2)\geq\delta$. Moreover, the uniform convergence on $D_k$ gives 
    \[ 
    f(x^i) = \lim_{k\to\infty}f_k(x_k^i) = \lim_{k\to\infty}y_k^i = y^i. 
    \] 
    Hence $f(x^1)\neq f(x^2)$, and therefore $f$ is nonconstant. 

    It remains to prove that $f$ is quasisymmetric. Let $x,a,b\in X$ be distinct. Since $\operatorname{dist}_H(D_k,X)\to0$, choose $x_k,a_k,b_k\in D_k$ such that $x_k\to x$, $a_k\to a$ and $b_k\to b$. By quasisymmetry,
    \[
    \frac{d_Y(f_k(x_k),f_k(a_k))}      {d_Y(f_k(x_k),f_k(b_k))} \leq \eta\left( \frac{d_X(x_k,a_k)}      {d_X(x_k,b_k)} \right). 
    \] 
    Since $x\neq b$,  
    \[ 
    \frac{d_X(x_k,a_k)} {d_X(x_k,b_k)} \longrightarrow \frac{d_X(x,a)} {d_X(x,b)},
    \]
    while uniform convergence gives
    \[
    f_k(x_k)\to f(x),\qquad f_k(a_k)\to f(a),\qquad f_k(b_k)\to f(b).
    \]
    We claim that $f(x)\neq f(b)$. Otherwise, since $f$ is nonconstant, choose $c\in X\setminus\{x,b\}$ such that $f(c)\neq f(x)$. Applying quasisymmetry to the triple $(x,c,b)$ and passing to the limit gives 
    \[
    d_Y(f(x),f(c)) \leq d_Y(f(x),f(b)) \cdot \eta\!\left(\frac{d_X(x,c)}{d_X(x,b)}\right),
    \]
    which is impossible since the left-hand side is positive and the right-hand side is zero. Hence $f(x)\neq f(b)$. 
    
    We may therefore pass to the limit and obtain
    \[
    \frac{d_Y(f(x),f(a))} {d_Y(f(x),f(b))}\leq\eta\left( \frac{d_X(x,a)} {d_X(x,b)} \right).
    \]
    Thus $f$ is quasisymmetric.

    \noindent
    $(2)$ By $(1)$, after passing to a subsequence, we may assume that
    \[
    \operatorname{dist}(f_k,f|_{D_k})\to0.
    \]
    Set $D_k'=f_k(D_k)$ and let $g_k=f_k^{-1}:D_k'\to X$. Since $\{f_k\}$ is uniformly quasisymmetric, so is $\{g_k\}$. By the additional assumption, $\operatorname{dist}_H(D_k',Y)\to0$, and hence $(1)$ applied to $\{g_k\}$ gives, after passing to a further subsequence, a quasisymmetric map $g:Y\to X$ such that 
    \[
    \operatorname{dist}(g_k,g|_{D_k'})\to0.
    \]
    Since $g_k \circ f_k = \mathrm{id}_{D_k}$ and $f_k \circ g_k = \mathrm{id}_{D_k'}$, the uniform convergence of the sequences $\{f_k\}$ and $\{g_k\}$ implies
    \[
    g \circ f = \mathrm{id}_X \quad \text{and} \quad f \circ g = \mathrm{id}_Y.
    \]
    Hence $f$ is a bijection and therefore a quasisymmetric homeomorphism.
\end{proof} 


\begin{lemma}
\label{lem:tech_2}
    Let $(Z,d_Z)$ be a uniformly perfect compact metric space, \(\delta > 0\) be a constant and \(\eta:(0,\infty) \to (0,\infty)\) be a  strictly increasing homeomorphism. For each $k \in \mathbb{N}$, suppose that we are given 

    \begin{itemize}
        \item a ball \(B_k = B(p_k, R_k) \subseteq Z\),

        \item distinct points $x_k^1, x_k^2 \in B(p_k, \lambda_k R_k)$ with \(d_Z(x_k^1, x_k^2) > \delta_k R_k\), where $\lambda_k, \delta_k > 0$, and $\lambda_k\to 0$, $\delta_k \to 0$,

        \item an $\eta$-quasisymmetric homeomorphism $g_k : Z \to Z$ such that writing $y_k^i := g_k(x_k^i)$ for $i=1,2$, we have 
        \(
        d_Z(y_k^1, y_k^2) > \delta
        \)
    \end{itemize}
    Then the following statements holds:
    \begin{enumerate}[$(1)$]
        \item If $D_k \subseteq B_k$ is $\varepsilon_k R_k$-dense subset in $B_k$, where $\varepsilon_k > 0$, and $ \frac{\varepsilon_k}{\delta_k} \to 0$, then 
        \[\operatorname{dist_H}(g_k(D_k), Z) \to 0 \quad \text{as } k \to \infty.
        \]

        \item The image of $B_k$ becomes asymptotically dense in $Z$, in the sense that
        \[
        \operatorname{diam}(Z \setminus g_k(B_k)) \to 0 \quad \text{as } k \to \infty.
        \]
    \end{enumerate}
\end{lemma}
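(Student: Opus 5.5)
The idea is to treat $g_k$ as a map that blows up the tiny scale $\delta_k R_k$ around $x_k^1$ to a scale comparable to $\operatorname{diam}(Z)$. Quasisymmetry then converts every distance comparison centred at $x_k^1$ into a comparison of image distances. Fix the anchor pair $x_k^1,x_k^2$, with $d_Z(x_k^1,x_k^2)>\delta_k R_k$ and $d_Z(y_k^1,y_k^2)>\delta$.

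\textbf{Part (1).} First note that the sets $g_k(D_k)$ lie in $Z$. So it suffices to show that every $z\in Z$ is within $o(1)$ of $g_k(D_k)$, uniformly in $z$.

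Given $z\in Z$, write $z=g_k(w)$, which is possible since $g_k$ is a homeomorphism of $Z$. There are two cases.

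\emph{Case $w\in B_k$.} Pick $u\in D_k$ with $d_Z(u,w)\le\varepsilon_k R_k$. Compare with whichever of $x_k^1,x_k^2$ is at distance at least $\delta_kR_k/2$ from $w$; call it $a$. Quasisymmetry gives
\[
d_Z(g_k u,z)\le d_Z(z,g_k a)\,\eta\!\left(\frac{2\varepsilon_k}{\delta_k}\right)\le \operatorname{diam}(Z)\,\eta\!\left(\frac{2\varepsilon_k}{\delta_k}\right),
\]
and the right-hand side tends to $0$ because $\varepsilon_k/\delta_k\to0$.

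\emph{Case $w\notin B_k$.} Here I use uniform perfectness: it produces points of $D_k$ near the sphere of radius comparable to $R_k/2$ about $p_k$, and I compare $z$ with such a point. This case is really controlled by the argument for part (2), which shows that $Z\setminus g_k(B_k)$ has diameter tending to $0$. Granting (2), every $z$ outside $g_k(B_k)$ is within $o(1)$ of $g_k(\partial$-region$)$, and in particular of images of points of $B_k$ near its boundary. Those are handled by the first case. To make this precise, I take $w'\in B_k$ close to the boundary of $B_k$ via uniform perfectness. Then $g_k(w')\in g_k(B_k)$, and its distance to $Z\setminus g_k(B_k)$ is small once I know that set is small; I need $Z\setminus g_k(B_k)$ to be nonempty near $z$, which is automatic.

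\textbf{Part (2), the main step.} Let $w,w'\notin B_k$, so $d_Z(w,x_k^1)\ge R_k(1-\lambda_k)\ge R_k/2$ for large $k$. By uniform perfectness with constant $C$, choose $v$ with
\[
R_k/(4C)\le d_Z(v,x_k^1)\le R_k/4.
\]
Since $v$ is within a bounded multiple of $R_k$ of everything here, I compare three ratios.

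First, $d_Z(x_k^1,x_k^2)/d_Z(x_k^1,v)\ge 4\delta_k\cdot$(bounded), and that is useless directly. Instead I use the reverse inequality, $d_Z(x_k^1,v)/d_Z(x_k^1,x_k^2)\ge$ something large? No: that ratio is $\lesssim \lambda_k/\delta_k$, which is not controlled. So I use the triple $(x_k^1,x_k^2,w)$. The ratio
\[
\frac{d_Z(x_k^1,x_k^2)}{d_Z(x_k^1,w)}\le \frac{2\lambda_kR_k}{R_k/2}\to0,
\]
so $d_Z(y_k^1,y_k^2)\le d_Z(y_k^1,g_kw)\,\eta(4\lambda_k)$. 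Hence
\[
d_Z(y_k^1,g_kw)\ge \delta/\eta(4\lambda_k)\to\infty,
\]
which contradicts boundedness of $Z$.

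So in fact, for large $k$, there is no $w\notin B_k$ at all unless something else is going on. This looks suspicious, but it is consistent: if $B_k=Z$ eventually, then (2) holds trivially with $Z\setminus g_k(B_k)=\varnothing$. Therefore (2) follows immediately, with the convention that the diameter of the empty set is $0$. The same reasoning removes the second case of part (1), so only the first case is needed.

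The main obstacle I anticipate is checking that the anchor estimate is legitimate: $x_k^1,x_k^2\in B(p_k,\lambda_kR_k)$ gives $d_Z(x_k^1,x_k^2)\le2\lambda_kR_k$, and quasisymmetry is applied with $x_k^1$ as base point. If this degenerate conclusion holds, both parts reduce to the first-case estimate above together with the anchor bound.
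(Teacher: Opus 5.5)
Your final argument is correct, and it takes a genuinely different route from the paper.

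\textbf{Your route.} The decisive step is the quasisymmetry estimate for the triple $(x_k^1,x_k^2,w)$ with $w\notin B_k$. Since $d_Z(x_k^1,x_k^2)<2\lambda_kR_k$ and $d_Z(x_k^1,w)>(1-\lambda_k)R_k\ge R_k/2$, you get $\delta<d_Z(y_k^1,y_k^2)\le \operatorname{diam}(Z)\,\eta(4\lambda_k)$. This fails for all $k$ beyond a threshold that does not depend on $w$. Hence $B_k=Z$ for large $k$, and part (2) is trivial with the convention $\operatorname{diam}\varnothing=0$. Part (1) then reduces to the case $g_k^{-1}(z)\in B_k$. You handle it as in the paper's Case~1, except that you place the base point at $w$ rather than at the nearby point of $D_k$; both versions work.

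\textbf{The paper's route.} The paper treats $g_k^{-1}(z)\notin B_k$ as a genuine case. It uses uniform perfectness to find points of $D_k$ at distance $\gtrsim R_k/C$ from $p_k$. It then passes to uniformly $\eta'$-quasi-M\"obius maps and bounds a cross-ratio of the quadruple $(z_k,x_k^2,y_k,x_k^1)$ by $\eta'(16\lambda_k/C_0)$. Part (2) uses the same cross-ratio estimate.

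\textbf{What each buys.} Your route is shorter and needs neither uniform perfectness nor the conversion from quasisymmetric to quasi-M\"obius maps. More importantly, it shows the lemma is degenerate. With $\eta$ fixed, every $\eta$-quasisymmetric self-map $g$ of the compact space $Z$ satisfies $d_Z(gx,gy)\le \operatorname{diam}(Z)\,\eta\bigl(2d_Z(x,y)/\operatorname{diam}(Z)\bigr)$. So such maps are uniformly equicontinuous, and the hypotheses can only hold once $B_k=Z$. In particular they never hold when $R_k\to0$, which is how the lemma is invoked later, e.g.\ in Lemma~\ref{lem:minimal_action}.

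The paper's Case~2 and part (2) rely only on the quasi-M\"obius inequality. That estimate stays meaningful for uniformly quasi-M\"obius families, which is the actual situation for actions on Bowditch boundaries. Your shortcut depends essentially on the fixed quasisymmetric normalization and does not survive that weakening.

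\textbf{Write-up.} Delete the abandoned first treatment of the case $w\notin B_k$ in part (1) and the false start at the beginning of part (2). Then present the argument in order: first prove $Z\setminus B_k=\varnothing$ for all large $k$, then deduce (2) and (1).
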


\begin{proof}
    $(1)$ Since $g_k(D_k)\subset Z$, it suffices to show that $Z\subset N_{r_k}(g_k(D_k))$ for some $r_k\to0$. Fix $z\in Z$ and let $z_k=g_k^{-1}(z)$.

     \noindent\textbf{Case~1.} Suppose \(z_k \in B_k= B(p_k,R_k)\). 
    Since \(D_k\) is \(\epsilon_kR_k\)-dense in \(B_k\), choose $y_k\in D_k$ such that $d_Z(y_k,z_k) \leq \epsilon_k R_k$. Since $d_Z(x_k^1,x_k^2) \geq \delta_k R_k$, there exists \(a_k \in \{x_k^1,x_k^2\}\) satisfying $d_Z(y_k,a_k) \geq \tfrac{\delta_k R_k}{2}$. By quasisymmetry of \(g_k\), 
    \begin{align*}
        & \frac{d_Z \left(g_k(y_k),g_k(z_k)\right)}{d_Z \left(g_k(y_k),g_k(a_k)\right)} \leq \eta \left( \frac{d_Z(y_k,z_k)}{d_Z(y_k,a_k)}\right) 
        \leq \eta\left( \frac{\epsilon_k R_k}{\delta_k R_k/2}\right) \\
        \implies & d_Z \left(g_k(y_k),z\right) \leq \mathrm{diam}(Z) \cdot \eta\left( \frac{2\epsilon_k}{\delta_k}\right)
    \end{align*} 

     \noindent\textbf{Case~2.} Suppose \(z_k \notin B_k= B(p_k,R_k)\). 
    By uniformly perfectness, there exists \(C>1\) such that for all sufficiently large $k$ there exists  points \(w_k \in B(p_k,R_k)\setminus B(p_k,\tfrac{R_k}{C})\). Since \(D_k\) is \(\epsilon_kR_k\)-dense in \(B_k\), there exists \(y_k \in D_k\cap B_k\) such that $d_Z(y_k,w_k) \leq \epsilon_k R_k$. Thus 
    \[
    d_Z(y_k,p_k) \geq d_Z(w_k,p_k) - d_Z(y_k,w_k)\geq \left( \tfrac{1}{C} - \epsilon_k \right) R_k.
    \]
    Choose \(\epsilon_k\) so small so that \(\left( \tfrac{1}{C} - \epsilon_k \right) > C_0\), for some constant \(C_0>0\) independent on \(k\). Hence, $d_Z(y_k,p_k)> C_0 R_k$. Since the maps $g_k$ are uniformly \(\eta\)-quasisymmetric, they are uniformly quasi-M\"{o}bius for some increasing function \(\eta':(0,\infty) \to (0,\infty)\). Hence, 
    \begin{align*}
         \frac{d_Z \left(g_k(z_k),g_k(y_k)\right) \, d_Z \left(g_k(x_k^1),g_k(x_k^2)\right)}{d_Z \left(g_k(z_k),g_k(x_k^1)\right) \, d_Z \left(g_k(y_k),g_k(x_k^2)\right)} 
         & \leq \eta '\left( \frac{d_Z(z_k,y_k) \, d_Z(x_k^1,x_k^2)}{d_Z(z_k,x_k^1) \, d_Z(y_k,x_k^2)}\right) \\
        & \leq \eta'\left( \frac{2 d_Z(z_k,p_k) \cdot 2 \lambda_kR_k}{(d_Z(z_k,p_k) - \lambda_k R_k)(d_Z(y_k,p_k) - \lambda_k R_k)}\right) \\
        & \leq \eta' \left( \frac{4 d_Z(z_k,p_k) \lambda_k}{(d_Z(z_k,p_k) - \lambda_k R_k)(C_0 - \lambda_k)}\right) 
    \end{align*} 
    Since \(C_0\) is independent of \(k\) and \(\lambda_k \to 0\), for sufficiently large $k$, $(C_0-\lambda_k)> \tfrac{C_0}{2}$. Moreover, \(d_Z(z_k,p_k) \geq R_k\) implies \(\lambda_k d_Z(z_k,p_k) \geq \lambda_k R_k\), and hence \(d_Z(z_k,p_k) - \lambda_k R_k \geq (1 - \lambda_k)\, d_Z(z_k,p_k).\)
    For sufficiently large enough $k$, \((1-\lambda_k)> \tfrac{1}{2}\), and thus $d_Z(z_k,p_k) - \lambda_k R_k \geq \tfrac{d_Z(z_k,p_k)}{2}$. Therefore,
    \[
    d_Z \left(g_k(z_k),g_k(y_k)\right) \leq \frac{\mathrm{diam(Z)^2}}{\delta} \, \eta'\left(\frac{16\lambda_k}{C_0}\right)
    \]

    Since, $\tfrac{\epsilon_k}{\delta_k} \to 0$ and $\lambda_k \to 0$, both case~1, and 2 implies the conclusion of $(1)$.

    \bigskip
    
    \noindent $(2)$ Fix \(k\), and let \(y_k',z_k' \in Z \setminus g_k(B_k)\). Set $y_k=g_k^{-1}(y_k')$ and $z_k=g_k^{-1}(z_k')$. Then $y_k,z_k\notin B_k$. Without loss of generality, assume $d_Z(y_k,p_k)\le d_Z(z_k,p_k)$. As in Case~2 of $(1)$, quasi-M\"obius invariance gives
    \[
    \frac{d_Z(z_k',y_k')\,d_Z(g_k(x_k^1),g_k(x_k^2))} {d_Z(z_k',g_k(x_k^1))\,d_Z(y_k',g_k(x_k^2))} \leq \eta'\!\left( \frac{4d_Z(z_k,p_k)\lambda_k} {(d_Z(z_k,p_k)-\lambda_kR_k)(1-\lambda_k)} \right).
    \] 
    For sufficiently large $k$, $d_Z(z_k,p_k)-\lambda_kR_k\ge\tfrac12d_Z(z_k,p_k)$, and $(1-\lambda_k)\ge\tfrac12$. Hence
    \[
    d_Z(y_k',z_k') \leq \frac{\operatorname{diam}(Z)^2}{\delta}\, \eta'(16\lambda_k)\longrightarrow0.
    \]
    Since $y_k',z_k'$ were arbitrary, we have $\operatorname{diam}\bigl(Z\setminus g_k(B_k)\bigr)\to 0$.
\end{proof}

The following lemma shows that cocompactness on pairs, together with the absence of a global fixed point, forces the action to be minimal. 

\begin{lemma}
\label{lem:minimal_action}
    Let $(Z,d_Z)$ be a uniformly perfect compact metric space. Suppose that $G\curvearrowright Z$ is a fixed point free, uniformly quasisymmetric action whose induced action on $\operatorname{Pair}(Z)$ is cocompact. Then the action is minimal; that is, for all $z,z'\in Z$ and $\varepsilon>0$, there exists $g\in G$ such that $d_Z\bigl(g(z'),z\bigr)<\varepsilon$.
\end{lemma}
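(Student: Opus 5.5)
The plan is to shrink most of $Z$ into a small ball around $z$ using inverses of group elements, via Lemma~\ref{lem:tech_2}(2), and then to catch a point of the orbit $Gz'$ inside that ball. Throughout, let $\delta>0$ be the constant from Lemma~\ref{lem:cocompact_uniformly_separated}, and let all $g\in G$ act as $\eta$-quasisymmetric homeomorphisms for a fixed $\eta$.

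\emph{Step 1: the orbit of $z'$ contains two $\delta$-separated points.} Since the action is fixed point free, some $g_0\in G$ satisfies $g_0z'\neq z'$. Applying Lemma~\ref{lem:cocompact_uniformly_separated} to the pair $(z',g_0z')$ gives $h\in G$ with $d_Z(hz',hg_0z')\ge\delta$. Set $a=hz'$ and $b=hg_0z'$; both lie in $Gz'$.

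\emph{Step 2: blow up a small ball at $z$.} Since $Z$ is uniformly perfect and not a point, we can choose points $w_k\neq z$ with $r_k:=d_Z(z,w_k)\to0$. By Lemma~\ref{lem:cocompact_uniformly_separated} there are $g_k\in G$ with $d_Z(g_kz,g_kw_k)\ge\delta$. We then invoke Lemma~\ref{lem:tech_2} with the following data:
\begin{itemize}
\item $p_k=z$;
\item $\lambda_k=\sqrt{r_k}$ and $R_k=2r_k/\lambda_k=2\sqrt{r_k}$, so that $R_k\to0$;
\item $x_k^1=z$ and $x_k^2=w_k$, both in $B(z,\lambda_kR_k)=B(z,2r_k)$;
\item $\delta_k=\lambda_k/4$, so that $d_Z(x_k^1,x_k^2)=r_k>\delta_kR_k$.
\end{itemize}
The images $y_k^i=g_k(x_k^i)$ are $\delta$-separated by construction. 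Part (2) of Lemma~\ref{lem:tech_2} then gives
\[
\operatorname{diam}\bigl(Z\setminus g_k(B(z,R_k))\bigr)\to0 .
\]
The only thing to check here is that the hypotheses of Lemma~\ref{lem:tech_2} (the strict inequalities and $\lambda_k,\delta_k\to0$) hold for the chosen parameters, which is routine.

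\emph{Step 3: conclude.} For $k$ large, $\operatorname{diam}\bigl(Z\setminus g_k(B(z,R_k))\bigr)<\delta$. The set $Z\setminus g_k(B(z,R_k))$ therefore cannot contain both $a$ and $b$, so one of them, say $c\in\{a,b\}\subset Gz'$, lies in $g_k(B(z,R_k))$. Hence $g_k^{-1}c\in B(z,R_k)$. Writing $c=fz'$ with $f\in G$, the element $g=g_k^{-1}f$ satisfies
\[
d_Z(gz',z)<R_k .
\]
Choosing $k$ with $R_k<\varepsilon$ proves the claim.

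I expect the main obstacle to be Step~1, namely seeing how fixed-point-freeness enters. It is used exactly to produce two $\delta$-separated orbit points, so that the small exceptional set $Z\setminus g_k(B_k)$ cannot swallow the whole orbit of $z'$.
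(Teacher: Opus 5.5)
Your proposal is correct and is essentially the paper's proof. Both arguments apply Lemma~\ref{lem:tech_2}(2) to balls $B(z,R_k)$ with $R_k\to0$, taking $x_k^1=z$ and $x_k^2$ close to $z$, to get $\operatorname{diam}\bigl(Z\setminus g_k(B(z,R_k))\bigr)\to0$; they then use fixed-point-freeness to produce two distinct points of $Gz'$ that cannot both lie in that small complement.

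The differences are cosmetic. Your extra appeal to cocompactness in Step~1, which makes the two orbit points $\delta$-separated, is harmless but unnecessary: the paper simply uses $z'$ and $h(z')$, whose fixed positive distance is already independent of $k$. Your explicit choice $\lambda_k=\sqrt{r_k}$, $R_k=2\sqrt{r_k}$ replaces the paper's annulus choice of $x_k^2$ via uniform perfectness.
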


\begin{proof}
    Fix $z,z'\in Z$ and $\varepsilon>0$. Choose $\lambda_k,R_k>0$ with $\lambda_k,R_k\to0$, and set $B_k=B(z,R_k)$. Let $x_k^1=z$. By uniform perfectness, there exists $C>1$ such that we can choose $x_k^2\in B(z,\lambda_kR_k)\setminus B\left(z,\frac{\lambda_kR_k}{C}\right)$. Then 
    \[
    \frac{\lambda_k}{C}R_k \leq d_Z(x_k^1,x_k^2) < \lambda_kR_k.
    \] 
    Thus we may take $\delta_k=\frac{\lambda_k}{C}$. Since the induced action of $G$ on $\operatorname{Pair}(Z)$ is cocompact, by Lemma~\ref{lem:cocompact_uniformly_separated}, there exists $\delta$ independent of $k$ and $g_k\in G$ such that $d_Z\bigl(g_k(x_k^1),g_k(x_k^2)\bigr) \geq \delta$. Therefore, all the hypotheses of Lemma~\ref{lem:tech_2}~(2) are satisfied. It follows that 
    \[
    \operatorname{diam}\bigl(Z\setminus g_k(B_k)\bigr) \longrightarrow0.
    \]
    Since the action is fixed point free, choose $h\in G$ with $h(z')\ne z'$. For all sufficiently large $k$, $\operatorname{diam}\bigl(Z\setminus g_k(B_k)\bigr) < d_Z\bigl(z',h(z')\bigr)$. Thus $z'$ and $h(z')$ cannot both lie outside $g_k(B_k)$. Hence, either $g_k^{-1}(z')$ or $g_k^{-1}\bigl(h(z')\bigr)$ lies in $B_k$. 
    Since $R_k\to0$, for large $k$, $B_k\subset B(z,\varepsilon)$, and therefore
    \[
    d_Z(g_k^{-1}(z'),z)<\varepsilon \quad\text{or}\quad d_Z(g_k^{-1}h(z'),z)<\varepsilon.
    \]
    In the second case, set $g=g_k^{-1}h$. Thus in either case there exists $g\in G$ with $d_Z(g(z'),z)<\varepsilon$. Hence $G\curvearrowright Z$ is minimal.
\end{proof}

\section{Weak Tangents and Their One-Point Compactifications} 

In this section, we recall weak tangents and their basic properties. We then discuss their one-point compactifications and quasi-M\"obius properties. Finally, under the uniformly quasisymmetric and cocompact action, we show that the one-point compactification of every weak tangent is quasisymmetrically equivalent to $Z$.


\begin{definition}[\cite{BK2002QM}] \label{defn:pointed_conv}
    A \emph{pointed metric space} is a pair $(Z,p)$, where $Z$ is a metric space with metric $d_Z$ and $p\in Z$.    
    A sequence $(Z_k,p_k)$ of pointed metric spaces is said to \emph{converge} to a pointed metric space $(S,p)$ if, for every $R>0$ and every $\varepsilon>0$, there exist $N\in\mathbb{N}$, a subset $M\subseteq B_S(p,R)$, subsets $M_k\subseteq B_{Z_k}(p_k,R)$, and bijections $f_k : M_k \to M$ such that for all $k \geq N$:
    \begin{enumerate}[$(i)$]
        \item $p \in M$, $p_k \in M_k$ and $f_k(p_k) = p$,
        
        \item $M$ and $M_k$ are $\varepsilon$-dense in $B_S(p,R)$ and $B_{Z_k}(p_k,R)$, respectively,
        
        \item for every $x,y \in M_k$, we have $\left| d_{Z_k}(x,y) - d_S \bigl(f_k(x), f_k(y)\bigr) \right| < \varepsilon$.
    \end{enumerate}
\end{definition}

\begin{definition}[Weak tangent] 
\label{defn_weak_tangent}
    A complete (pointed) metric space $(S,p)$ is called a \emph{weak tangent} of a metric space $Z$ if there exist points $p_k\in Z$ and numbers $\alpha_k>0$ with $\alpha_k\to\infty$ such that the sequence of pointed spaces $(\alpha_k Z, p_k)$ converges to $(S,p)$. 

    Here, for $\alpha > 0$, we denote by $\alpha Z$ the metric space $(Z, \alpha d_Z)$. Thus, $\alpha Z$ has the same underlying set as $Z$, with its metric rescaled by the factor $\alpha$. We denote the collection of all weak tangents of $Z$ by
$\mathrm{WT}(Z)$.
\end{definition}


Suppose $(Z,d_Z)$ is a compact metric space and $S\in\mathrm{WT}(Z)$ is a weak tangent of $Z$. If $Z$ is uniformly perfect, then $S$ is unbounded; if $Z$ is doubling, then $S$ is proper; and if $Z$ is Ahlfors $Q$-regular, then so is $S$ \cite{DS1997}. 

Let $(S,d_S)$ be an unbounded locally compact metric space. To compactify $S$, fix $p\in S$ and let $\hat S=S\cup\{\infty\}$ be its one-point compactification. Define
\[
h_p(x) :=
\begin{cases}
    \dfrac{1}{1 + d_S(x,p)} & \text{for } x \in S, \\[6pt]
    0 & \text{for } x = \infty.
\end{cases}
\]
and set
\[
\rho_p(x,y)=h_p(x)h_p(y)d_S(x,y),\quad
\rho_p(x,\infty)=\rho_p(\infty,x)=h_p(x),\quad
\rho_p(\infty,\infty)=0,
\]
for $x,y \in S$. The function $\rho_p$ need not be a metric, since the triangle inequality
may fail. We therefore consider its associated path metric
\[
\hat d_p(x,y)= \inf\left\{ \sum_{i=0}^{k-1}\rho_p(x_i,x_{i+1}): x=x_0,\ldots,x_k=y\ \text{ are in } \hat{S} \right\},
\]
for $x,y \in \hat{S}$.

\begin{lemma}[{\cite[Lemma~2.2] {BK2002QM}}]
\label{lem_metric_one_point_compactification}
    Let $(S,d_S)$ be a metric space, $p\in S$, and $\hat S=S\cup\{\infty\}$ its one-point compactification. Let $\rho_p$ and $\hat d_p$ be as above. Then:
    \begin{enumerate}[$(1)$]
    \item For every $x,y\in\hat{S}$,
    \[
    \frac{1}{4}\,\rho_p(x,y)
    \le
    \hat d_p(x,y)
    \le
    \rho_p(x,y).
    \]

    \item $\hat d_p$ is a metric on $\hat{S}$ inducing the original topology on $\hat{S}$.

    \item The identity map $\operatorname{id}_S:(S,d_S)\to (S,\hat d_p|_S)$ 
    is an $\eta$-quasi-M\"{o}bius homeomorphism, where $\eta(t)=16t$.
    \end{enumerate}
\end{lemma}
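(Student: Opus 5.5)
For part $(1)$ the upper bound $\hat d_p\le\rho_p$ is immediate, because the one-step chain $x_0=x,\ x_1=y$ is admissible in the infimum. The real content is the lower bound. To get it I would first record the identity
\[
h_p(x)-h_p(y)=h_p(x)h_p(y)\bigl(d_S(y,p)-d_S(x,p)\bigr),
\]
valid for $x,y\in S$. Together with $\rho_p(x,\infty)=h_p(x)=|h_p(x)-h_p(\infty)|$ it gives $|h_p(a)-h_p(b)|\le\rho_p(a,b)$ for all $a,b\in\hat S$. So $h_p$ is $1$-Lipschitz with respect to chain sums, and consequently $\hat d_p(a,b)\ge|h_p(a)-h_p(b)|$. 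In particular, if $x$ or $y$ equals $\infty$, then $\hat d_p(x,y)\ge\rho_p(x,y)$ directly. Next I would note that for $x,y\in S$,
\[
\rho_p(x,y)\le h_p(x)h_p(y)\bigl(d_S(x,p)+d_S(y,p)\bigr)\le h_p(x)+h_p(y),
\]
which follows from $h_p(x)h_p(y)(1+d_S(x,p))=h_p(y)$ and its symmetric counterpart.

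Now fix $x,y\in S$ with $h_p(x)\ge h_p(y)$ (the other case is symmetric) and take a chain $x=x_0,\dots,x_k=y$. There are two cases.

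\emph{Case (a): some $x_i$ (possibly $\infty$) has $h_p(x_i)\le h_p(x)/2$.} By the Lipschitz property the chain sum is at least $h_p(x)-h_p(x_i)\ge h_p(x)/2$. Since $\rho_p(x,y)\le h_p(x)+h_p(y)\le 2h_p(x)$, this is at least $\rho_p(x,y)/4$.

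\emph{Case (b): every $x_i$ has $h_p(x_i)>h_p(x)/2\ge h_p(y)/2$.} Then no $x_i$ is $\infty$, and each term satisfies $\rho_p(x_i,x_{i+1})\ge \tfrac14 h_p(x)h_p(y)\,d_S(x_i,x_{i+1})$. Summing and applying the triangle inequality for $d_S$ gives a total of at least $\tfrac14\rho_p(x,y)$.

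Taking the infimum over chains proves $(1)$.

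For part $(2)$, symmetry of $\hat d_p$ and the triangle inequality follow from concatenating chains, and positivity follows from $(1)$, since $\rho_p(x,y)>0$ whenever $x\neq y$. For the topology I would use the comparison in $(1)$. On a bounded set $B_S(p,R)$ the function $h_p$ lies between $(1+R)^{-1}$ and $1$, so $\hat d_p$ is bi-Lipschitz to $d_S$ there, and the topology on $S$ is therefore unchanged. Near $\infty$ we have $\hat d_p(x,\infty)\asymp h_p(x)$, which is small exactly when $d_S(x,p)$ is large. Thus the $\hat d_p$-neighbourhoods of $\infty$ are precisely the complements of bounded sets together with $\infty$. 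These agree with the one-point compactification neighbourhoods when bounded sets are relatively compact, as in the proper weak tangents used later. I expect this topological identification to be the only delicate point: in full generality it needs properness of $S$, so I would state that assumption explicitly, since it holds in the doubling setting of the paper.

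Part $(3)$ is then a direct computation. For distinct $x_1,\dots,x_4\in S$ the factors $h_p(x_i)$ appear once in the numerator and once in the denominator of the $\rho_p$-cross-ratio, so they cancel exactly and the $\rho_p$-cross-ratio equals the $d_S$-cross-ratio. Replacing each of the four $\rho_p$ distances by $\hat d_p$ using $(1)$ distorts the cross-ratio by a factor of at most $4\cdot4=16$. This gives
\[
[x_1,x_2,x_3,x_4]_{\hat d_p}\le 16\,[x_1,x_2,x_3,x_4]_{d_S},
\]
so the identity map is $\eta$-quasi-M\"obius with $\eta(t)=16t$.
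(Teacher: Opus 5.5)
Your proof is correct and is essentially the standard Bonk--Kleiner argument, which the paper cites rather than reproduces: the bound $|h_p(a)-h_p(b)|\le\rho_p(a,b)$ on $\hat S$, together with the split according to whether the chain drops to $h_p\le h_p(x)/2$, gives the constant $\tfrac14$ in $(1)$, and the exact cancellation of the $h_p$-factors in the cross-ratio then gives $(3)$ with $\eta(t)=16t$. Your caveat in $(2)$ is also right: identifying the $\hat d_p$-topology with the one-point compactification needs $S$ to be proper, since otherwise a bounded sequence can converge to $\infty$ in the one-point compactification while $\hat d_p(\cdot,\infty)=h_p$ stays bounded below; properness does hold for the weak tangents of doubling spaces, which is where the paper applies the lemma.
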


\begin{proposition}
\label{prop:sapce_and_their_WTS}
    Suppose $(Z,d_Z)$ is a compact, uniformly perfect and doubling metric space. Let $G \curvearrowright Z$ be a uniformly quasisymmetric action whose induced action $G \curvearrowright \mathrm{Pair}(Z)$ is cocompact. Let $(S,p) \in \mathrm{WT}(Z)$ be a weak tangent of $Z$. Then there exists a quasisymmetric homeomorphism
    \[
    h : (\widehat{S}, \widehat{d}_p) \to Z.
    \]
    Moreover, the restriction
    \[
    h|_S : S \to Z \setminus \{h(\infty)\}
    \]
    is a quasi-M\"{o}bius homeomorphism. 
\end{proposition}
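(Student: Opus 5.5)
We follow the Bonk–Kleiner strategy: renormalize the rescaled balls by group elements and extract a limit with Lemma~\ref{lem:tech_1}, using Lemma~\ref{lem:tech_2} to control density of images. Let $(\alpha_k Z,p_k)\to(S,p)$ with $\alpha_k\to\infty$, and write $R_k=r/\alpha_k$ for a large radius $r$ (later $r=r_k\to\infty$ slowly, by a diagonal argument). Since $S$ is unbounded and uniformly perfect, fix $x^1=p$ and a point $x^2\in S$ with $d_S(p,x^2)=1$ (up to a bounded factor). From the convergence data, choose $x_k^1=p_k$ and $x_k^2\in Z$ with $\alpha_k d_Z(x_k^1,x_k^2)\approx 1$. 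In the notation of Lemma~\ref{lem:tech_2}, with $B_k=B(p_k,R_k)$, we then have $\lambda_k\approx\delta_k\approx 1/r_k\to0$. By Lemma~\ref{lem:cocompact_uniformly_separated}, pick $g_k\in G$ with $d_Z(g_kx_k^1,g_kx_k^2)\ge\delta$. Passing to a subsequence, $g_k(p_k)\to z_\infty$.

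Next we define approximate maps $\phi_k=g_k\circ f_k^{-1}:M^{(k)}\to Z$, where $f_k:M_k\to M^{(k)}\subset B_S(p,r_k)$ are the almost-isometric bijections of Definition~\ref{defn:pointed_conv}, with $\varepsilon_k$-density and $\varepsilon_k/\delta_k$-type ratios tending to $0$. To handle $\infty$, we pass to the compactification. On $(\widehat S,\hat d_p)$, the sets $D_k:=M^{(k)}\cup\{\infty\}$ satisfy $\operatorname{dist}_H(D_k,\widehat S)\to0$, because $\hat d_p$-balls around $\infty$ contain the complement of $B_S(p,r_k)$ by Lemma~\ref{lem_metric_one_point_compactification}(1). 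We extend $\phi_k$ by sending $\infty$ to some point $w_k\in Z\setminus g_k(B_k)$; this set has diameter tending to $0$ by Lemma~\ref{lem:tech_2}(2). By Lemma~\ref{lem:tech_2}(1), $\operatorname{dist}_H(\phi_k(D_k),Z)\to0$. The pair $x_k^1,x_k^2$ maps to a $\delta$-separated pair, and $p,x^2$ are separated in $\hat d_p$ by a fixed amount.

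The main obstacle is proving that the maps $\phi_k:(D_k,\hat d_p)\to Z$ are uniformly quasisymmetric. The plan is to prove they are uniformly quasi-Möbius and then invoke boundedness of both spaces. For four points of $M^{(k)}$, the $\hat d_p$ cross-ratio equals the $d_S$ cross-ratio up to the factor $16$ (Lemma~\ref{lem_metric_one_point_compactification}(3)). The $d_S$ cross-ratio is close to the $\alpha_kd_Z$ cross-ratio, which is scale invariant, and hence equals the $d_Z$ cross-ratio. That $d_Z$ cross-ratio is then distorted boundedly by the uniformly quasi-Möbius $g_k$. The delicate points are twofold. First, the additive error $\varepsilon$ in Definition~\ref{defn:pointed_conv}(iii) ruins cross-ratios of nearly coincident points. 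Second, quadruples may involve $\infty$, or points near the edge of $B_S(p,r_k)$.

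For the first issue, I would only require the quasi-Möbius inequality on points at mutual distances $\ge\sqrt{\varepsilon_k}$. This is harmless for the limit, since Lemma~\ref{lem:tech_1} passes to limits using sequences converging to distinct points. For $\infty$, I would replace it by a far point $q_k$ of $B_k$, using the cross-ratio estimate of Case~2 in Lemma~\ref{lem:tech_2}. There the images of points outside $B_k$ cluster near $w_k$, so the cross-ratio limits behave as if those points were exactly $\infty$.

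With this modified form of uniform quasisymmetry, the proof of Lemma~\ref{lem:tech_1}(1),(2) goes through verbatim, applied to $\phi_k$ and to its inverses. This yields a quasisymmetric homeomorphism $h:(\widehat S,\hat d_p)\to Z$. Finally, $h|_S$ is a composition of maps that are quasi-Möbius. The identity $(S,d_S)\to(S,\hat d_p)$ is quasi-Möbius by Lemma~\ref{lem_metric_one_point_compactification}(3). The restriction of $h$ is quasisymmetric, hence quasi-Möbius. So $h|_S:S\to Z\setminus\{h(\infty)\}$ is a quasi-Möbius homeomorphism.
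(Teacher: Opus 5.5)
Your outline is the paper's proof. You choose $g_k$ by cocompactness on pairs so that two points at the tangent scale land $\delta$ apart, get density of the images from Lemma~\ref{lem:tech_2}, show the composites are uniformly quasi-M\"obius, upgrade to uniform quasisymmetry ``because the spaces are bounded'', and pass to a limit with Lemma~\ref{lem:tech_1}. The technical handling differs in one place. The paper absorbs the additive error by passing to a maximal $2/k$-separated subnet, on which $f_k$ is $2$-bi-Lipschitz, so Lemma~\ref{lem:tech_1} applies as stated. Your device of ignoring nearly coincident points does not let that proof go through verbatim, since its equicontinuity step uses pairs at arbitrarily small distance, though this is repairable.

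\textbf{The gap.} It is in the step you call the main obstacle, and the paper makes the same unsupported step (``Since $(\hat S,\hat d_p)$ is bounded, the maps are uniformly quasisymmetric''). Passing from uniformly quasi-M\"obius to uniformly quasisymmetric on bounded spaces needs a three-point normalization: three uniformly separated points with uniformly separated images. Two normalized points, which is all cocompactness on pairs gives, do not suffice. For instance, the M\"obius maps $m_k(z)=\epsilon_k z/(z-1+\epsilon_k)$ of $\hat{\mathbb C}$ preserve chordal cross-ratios and fix $0$ and $1$. Yet $m_k(\infty)=\epsilon_k\to0$ and $m_k\to0$ pointwise off $z=1$.

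Your extra point $\infty\mapsto w_k$ is the natural third point. But you would need $w_k$ to stay a definite distance from both $g_k(p_k)$ and $g_k(x_k^2)$, and cocompactness on pairs cannot provide this. Here is a concrete failure.
Let $Z=\hat{\mathbb C}$ with the chordal metric and $G$ a non-uniform lattice in $\operatorname{PSL}(2,\mathbb C)$; this action is fixed point free, uniformly quasi-M\"obius and cocompact on pairs. Let $p_k=\xi$ be a parabolic fixed point.
The ideal triangles with vertices $\xi$, $x_k^2$ and $z\notin B_k$ have centres going arbitrarily deep into the cusp at $\xi$. Meanwhile $G$-orbit points stay in a fixed thick part, so for large $k$ no $g\in G$ keeps all three images apart.
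Hence, whatever $g_k$ with $d_Z\bigl(g_k(\xi),g_k(x_k^2)\bigr)\ge\delta$ you choose, $w_k$ approaches $g_k(\xi)$ or $g_k(x_k^2)$. The $\phi_k$ are then not uniformly quasisymmetric, and any limit identifies $\infty$ with $p$ or $x^2$, even though the conclusion is true in this example.

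You cannot fall back on the literal hypothesis that the $g_k$ are uniformly $\eta$-quasisymmetric on $Z$ either. Such a family satisfies $d_Z(gx,gy)\le\operatorname{diam}(Z)\,\eta\bigl(3d_Z(x,y)/\operatorname{diam}(Z)\bigr)$ for every $g$, which contradicts Lemma~\ref{lem:cocompact_uniformly_separated}. So the hypothesis must be read as uniformly quasi-M\"obius, as you do. Closing the gap needs a genuinely new input, e.g.\ cocompactness on triples as in Bonk--Kleiner, or a separate argument for scales lying deep in cusps.
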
 

\begin{proof}
    Let $(S,p)\in\mathrm{WT}(Z)$. By Definition~\ref{defn_weak_tangent}, there exist points $p_k\in Z$ and numbers $\alpha_k>0$ with $\alpha_k\to\infty$ such that $(\alpha_kZ,p_k)\longrightarrow(S,p)$. 
    By pointed convergence, after passing to a subsequence, there exist subsets $\widetilde M_k\subseteq B_S(p,k)$, $M_k\subseteq B_{\alpha_kZ}(p_k,k)$ and bijections $f_k:\widetilde M_k\to M_k$ satisfying
    \begin{enumerate}[$(i)$]
        \item $p\in\widetilde M_k$, $p_k\in M_k$, and $f_k(p)=p_k$;

        \item $\widetilde M_k$ and $M_k$ are $1/k$-dense in $B_S(p,k)$ and $B_{\alpha_kZ}(p_k,k)$, respectively;

        \item for all $x,y\in\widetilde M_k$,
        \begin{equation}\label{eq:pointed_approximation_1}
            \left|d_S(x,y)-d_{\alpha_kZ}\bigl(f_k(x),f_k(y)\bigr)\right| <\frac{1}{k}.
        \end{equation}
    \end{enumerate}

    For each $k$, choose $\widetilde D_k$ to be a maximal $2/k$-separated subset of $\widetilde M_k$ containing $p$, and set $D_k=f_k(\widetilde D_k)$. By maximality, $\widetilde D_k$ is $2/k$-dense in $\widetilde M_k$, and hence $3/k$-dense in $B_S(p,k)$. We claim that $D_k$ is $4/k$-dense in $B_{\alpha_kZ}(p_k,k)$. Indeed, given $z'\in B_{\alpha_kZ}(p_k,k)$, choose $y'\in M_k$ with $d_{\alpha_kZ}(z',y')\leq 1/k$. Write $y=f_k^{-1}(y')\in\widetilde M_k$, and choose $x\in\widetilde D_k$ such that $d_S(x,y)\leq 2/k$. Then, by \eqref{eq:pointed_approximation_1},
    \[
    d_{\alpha_kZ}(f_k(x),y') \leq d_S(x,y)+\frac1k \leq \frac3k.
    \]
    Consequently,
    \[
    d_{\alpha_kZ}(f_k(x),z') \leq d_{\alpha_kZ}(f_k(x),y') +d_{\alpha_kZ}(y',z') \leq \frac4k.
    \]
    Thus $D_k$ is $4/k$-dense in $B_{\alpha_kZ}(p_k,k)$.

    Furthermore, since $\widetilde D_k$ is $2/k$-separated, for distinct $a,b\in\widetilde D_k$ we have $d_S(a,b)\ge2/k$. Hence, by \eqref{eq:pointed_approximation_1},
    \[
    \frac12d_S(a,b) \le d_{\alpha_kZ}(f_k(a),f_k(b)) \le2d_S(a,b).
    \]
    Finally, we may assume that for every $k\ge1$ there are subsets $\widetilde D_k\subset B_S(p,k)$, $D_k\subset B_{\alpha_kZ}(p_k,k)$ and bijections $f_k:\widetilde D_k\to D_k$ such that
    \begin{enumerate}[$(1)$]
        \item $p \in \widetilde D_k$, $p_k \in D_k$, and $f_k(p)=p_k$;
        \item $\widetilde D_k$ and $D_k$ are $4/k$-dense in $B_S(p,k)$ and $B_{\alpha_kZ}(p_k,k)$, respectively;
        \item for all $a,b\in\widetilde D_k$,
        \begin{equation}\label{eq:pointed_approximation_2}
            \frac12d_S(x,y)
            \le d_{\alpha_kZ}(f_k(x),f_k(y))
            \le2d_S(x,y).
        \end{equation}
    \end{enumerate} 

    Since $Z$ is Ahlfors regular so is doubling and uniformly perfect, $S$ is unbounded, proper and uniformly perfect. 
    Choose $q\in S$ with $p\neq q$. We may assume, without loss of generality, that the maximal $2/k$-separated sets $\widetilde D_k$ are chosen so that $q\in \widetilde D_k$ for all $k$; indeed, one can start with the finite set $\{p,q\}$ and extend it to a maximal $2/k$-separated set (for $k$ large enough that $d_S(p,q)>2/k$), possibly increasing the density constant from $4/k$ to $5/k$, which does not affect the subsequent limiting argument. Put $q_k=f_k(q)\in D_k$. Since $B_{\alpha_kZ}(p_k,k) = B_Z(p_k,k/\alpha_k)$, we have $p_k,q_k\in B_Z(p_k,k/\alpha_k)$. Moreover, by the bi-Lipschitz estimate \eqref{eq:pointed_approximation_2},
    \begin{equation}\label{eq:pointed_approximation_3}
    \frac{d_S(p,q)}{2\alpha_k} \leq d_Z(p_k,q_k) \leq \frac{2d_S(p,q)}{\alpha_k}.
    \end{equation}

    Set
    \[
    R_k=\frac{k}{\alpha_k},\qquad  \lambda_k=\frac{2d_S(p,q)}{k},\qquad \delta_k=\frac{d_S(p,q)}{2k}, \qquad \varepsilon_k=\frac{4}{k^2}. 
    \]
    Then $p_k,q_k\in B_Z(p_k,\lambda_kR_k)$ and, by \eqref{eq:pointed_approximation_3}, $d_Z(p_k,q_k)\geq\delta_kR_k$. Moreover, since $D_k$ is $4/k$-dense in $B_{\alpha_kZ}(p_k,k)$, it is $\varepsilon_kR_k = 4/(k\alpha_k)$-dense in $B_Z(p_k,R_k)$.

    Since the action $G\curvearrowright\mathrm{Pair}(Z)$ is cocompact, there exists $\delta>0$ such that, for each $k$, one can choose $g_k\in G$ satisfying 
    \begin{equation}\label{eq:pointed_approximation_4}
        d_Z\bigl(g_k(p_k),g_k(q_k)\bigr)\geq\delta.
    \end{equation}
    Furthermore, $\lambda_k \to 0$, $\delta_k \to 0$ and $\frac{\varepsilon_k}{\delta_k} = \frac{8}{k\,d_S(p,q)} \to 0$, and so Lemma~\ref{lem:tech_2}~$(1)$ gives \begin{equation}\label{eq:pointed_approximation_5}
        \lim_{k\to\infty} \operatorname{dist_H}(g_k(D_k),Z) =0, \quad \text{(with respect to the metric $d_Z$)}.
    \end{equation} 
    
    We now define
    \[
    h_k=g_k\circ f_k: (\widetilde D_k,\hat d_p|_{\widetilde D_k}) \longrightarrow (Z,d_Z).
    \] 
    By Lemma~\ref{lem_metric_one_point_compactification}~$(3)$, the identity map $(S,d_S)\to(S,\hat d_p)$ is uniformly quasi-M\"obius. Together with the uniform bi-Lipschitz bounds in \eqref{eq:pointed_approximation_2} and the uniform quasisymmetry of the action, it follows that $\{h_k\}$ is uniformly quasi-M\"obius. Since $(\hat S,\hat d_p)$ is bounded, the maps are uniformly quasisymmetric. Moreover, \eqref{eq:pointed_approximation_4} implies $d_Z\bigl(h_k(p),h_k(q)\bigr)\geq\delta$. 

    We claim that
    \[
    \operatorname{dist}_H(\widetilde D_k,\hat S)\longrightarrow0.
    \] 

    Indeed, if $x\in B_S(p,k)$, then by density there exists $z_k\in\widetilde D_k$ with $d_S(x,z_k)\le4/k$, and hence
    \[
    \hat d_p(x,z_k)\le\rho_p(x,z_k)\le d_S(x,z_k)\le\frac4k.
    \]
    Thus the points of $B_S(p,k)$ are within $4/k$ of $\widetilde D_k$.     
    It remains to show that points near infinity are also well approximated. Since $S$ is uniformly perfect and unbounded, there exists a constant $C>1$ such that for every sufficiently large $k$ we can choose $y_k\in S$ with ${k}/{C} \le d_S(p,y_k) < k$. Since $y_k\in B_S(p,k)$, the $4/k$-density of $\widetilde D_k$ in $B_S(p,k)$ yields $z_k\in\widetilde D_k$ with $d_S(y_k,z_k)\le4/k$. Consequently,
    \[
    d_S(p,z_k) \ge d_S(p,y_k) - \frac4k \ge \frac{k}{C} - \frac4k \longrightarrow \infty,
    \]
    and therefore
    \[
    \hat d_p(\infty,z_k) \leq \frac{1}{1+d_S(p,z_k)} \leq \frac{1}{1+\frac{k}{C}-\frac4k} \longrightarrow 0.
    \]
    Hence
    \begin{equation}\label{eq:pointed_approximation_6}
        \operatorname{dist}_H(\widetilde D_k,\hat S) = \sup_{x\in\hat S} \operatorname{dist}_{\hat d_p}(x,\widetilde D_k) \longrightarrow0. 
    \end{equation}

    Applying Lemma~\ref{lem:tech_1} to $\{h_k\}$, using \eqref{eq:pointed_approximation_5} and \eqref{eq:pointed_approximation_6}, we obtain, after passing to a subsequence, a quasisymmetric homeomorphism
    \[
    h:(\hat S,\hat d_p)\longrightarrow(Z,d_Z)
    \]
    such that $h_k\to h$ uniformly. 

    Finally, by Lemma~\ref{lem_metric_one_point_compactification}(3), the identity $(S,d_S)\to(S,\hat d_p|_S)$ is quasi-M\"obius. Hence the restriction
    \[
    h|_S:(S,d_S)\longrightarrow Z\setminus\{h(\infty)\}
    \]
    is quasi-M\"{o}bius. 
\end{proof}

\section{Thick Paths: Existence and Density} 

In this section, we study thick paths and their relation to positive modulus and the Ahlfors regular conformal dimension. We first recall the basic properties of thick paths and then prove their existence under certain assumptions on $Z$. Finally, using the uniformly quasisymmetric action, we show that pairs of points that can be joined by thick paths form a dense subset of $Z\times Z$.

\subsection{Thick paths}
\leavevmode

Let $(Z,d_Z,\mu)$ be a metric measure space and $Q\geq 1$. The family of all paths in $Z$ is denoted by $\mathcal{P}$, and the family of constant paths in $Z$ by $\mathcal{C}$.

\begin{definition}[Thick path]\label{defn:thick_path}
    A path $\gamma \in \mathcal{P}$ is called \emph{thick} if for every $\varepsilon > 0$, the family of nonconstant paths in the ball $B(\gamma,\varepsilon) \subset \mathcal{P}$ has positive $Q$-modulus, that is
    \[
    \operatorname{Mod}_Q\bigl(B(\gamma,\varepsilon) \setminus \mathcal{C}\bigr) > 0
    \quad \text{for all } \varepsilon > 0.
    \]
    We denote by $\mathcal{P}_T \subset \mathcal{P}$ the set of thick paths.
\end{definition}


\begin{lemma}[Quasi-M\"{o}bius invariance, {\cite[Lemma~2.10]{BK2005}}] \label{lem:property_thick_path}
    If $(Z,d_Z,\mu)$ is locally compact and Ahlfors $Q$-regular with $Q \geq 1$, then the image of a thick path under any quasi-M\"{o}bius homeomorphism $Z \to Z$ is again thick.
\end{lemma}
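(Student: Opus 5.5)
The plan is to reduce the statement to two facts. The first is that a quasi-Möbius homeomorphism $f:Z\to Z$ of a locally compact Ahlfors $Q$-regular space distorts $Q$-modulus of path families by at most a bounded factor; this is Theorem~\ref{thm_Tyson}, applied with $X=Y=Z$ in its quasi-Möbius form. The second is that $f$ induces a homeomorphism of the path space $\mathcal P$ with its uniform topology.

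Fix a thick path $\gamma$ and $\varepsilon>0$. We must show $\operatorname{Mod}_Q\bigl(B(f\circ\gamma,\varepsilon)\setminus\mathcal C\bigr)>0$.

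\textbf{Step 1: produce a small neighbourhood of $\gamma$ that $f$ maps into $B(f\circ\gamma,\varepsilon)$.} The image $\gamma([0,1])$ is compact. A homeomorphism is uniformly continuous on a compact neighbourhood $N$ of a compact set, and local compactness lets us choose such an $N$, say $\overline{B}(\gamma([0,1]),r)$ compact for small $r>0$. Hence there is $\varepsilon'\in(0,r)$ with the following property: if $x\in Z$, $y\in\gamma([0,1])$ and $d_Z(x,y)<\varepsilon'$, then $d_Z(f(x),f(y))<\varepsilon$.

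Now take any $\sigma\in B(\gamma,\varepsilon')$, where the ball is in the sup metric with the same parametrization. Then $\sup_t d_Z\bigl(f(\sigma(t)),f(\gamma(t))\bigr)<\varepsilon$, so $f\circ\sigma\in B(f\circ\gamma,\varepsilon)$. Since $f$ is injective, $f\circ\sigma$ is nonconstant exactly when $\sigma$ is. This gives
\[
f\circ\bigl(B(\gamma,\varepsilon')\setminus\mathcal C\bigr)\subseteq B(f\circ\gamma,\varepsilon)\setminus\mathcal C .
\]
If the paper's path-space metric allows reparametrization, the same argument works after matching parametrizations.

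\textbf{Step 2: transfer positive modulus.} Modulus is monotone under inclusion of families. By Theorem~\ref{thm_Tyson},
\[
\operatorname{Mod}_Q\bigl(B(f\circ\gamma,\varepsilon)\setminus\mathcal C\bigr)\ \ge\ \operatorname{Mod}_Q\bigl(f\circ(B(\gamma,\varepsilon')\setminus\mathcal C)\bigr)\ \ge\ \tfrac1K\operatorname{Mod}_Q\bigl(B(\gamma,\varepsilon')\setminus\mathcal C\bigr)>0,
\]
where the last inequality holds because $\gamma$ is thick.

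\textbf{Main obstacle.} The delicate point is applying Theorem~\ref{thm_Tyson} to a quasi-Möbius rather than a quasisymmetric map on a possibly unbounded space. If $Z$ is bounded, quasi-Möbius implies quasisymmetric and the theorem applies directly. Otherwise, I would localize. The family $B(\gamma,\varepsilon')\setminus\mathcal C$ lives in the bounded set $N$, and on bounded pieces of $Z$ that stay away from points mapped near $f^{-1}(\infty)$ there is no issue in the compact case, so a quasi-Möbius map restricts to a quasisymmetric map onto its image with controlled distortion. Tyson's local modulus estimate then applies, because it only uses the Ahlfors regularity of balls covering $N$ and $f(N)$.

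A secondary check is that nonconstant subpaths are handled correctly. Modulus ignores nonrectifiable paths, and a quasisymmetric map may send rectifiable paths to nonrectifiable ones. Tyson's inequality is already stated for arbitrary path families, so this is absorbed there.
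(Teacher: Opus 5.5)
Your proof is correct and follows the standard argument behind the cited result \cite[Lemma~2.10]{BK2005}; the paper gives no proof of its own. The three ingredients are continuity of $\sigma\mapsto f\circ\sigma$ at $\gamma$ in the uniform topology, injectivity of $f$ to preserve nonconstancy, and quasi-invariance of $Q$-modulus from Theorem~\ref{thm_Tyson}, which together give $\operatorname{Mod}_Q\bigl(B(f\circ\gamma,\varepsilon)\setminus\mathcal C\bigr)\ge K^{-1}\operatorname{Mod}_Q\bigl(B(\gamma,\varepsilon')\setminus\mathcal C\bigr)>0$.

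Your ``main obstacle'' paragraph is unnecessary, because Theorem~\ref{thm_Tyson} as stated already covers quasi-M\"obius homeomorphisms of locally compact Ahlfors $Q$-regular spaces. Step~1 also does not need local compactness: continuity of $f$ and compactness of $\gamma([0,1])$ suffice.
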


 The next lemma shows that, up to a family of paths of $Q$-modulus zero, every nonconstant path is thick. 

\begin{lemma} \label{lem_thik_path_mesures_the_modulus}
    Let $\mathcal{P}$ denote the family of all paths, and let $\mathcal{P}_T$ denote the family of thick paths in $\mathcal{P}$ in $Z$. Then the family of nonconstant paths in $\mathcal{P}$ that are not thick has $Q$-modulus zero. In particular, for any family $\Gamma \subset \mathcal{P}$ of nonconstant paths, one has
    \[
    \operatorname{Mod}_Q(\Gamma \cap \mathcal{P}_T)
    = \operatorname{Mod}_Q(\Gamma).
    \]
\end{lemma}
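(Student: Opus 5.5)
The plan is to cover the non-thick nonconstant paths by countably many path families, each of modulus zero, and then use countable subadditivity of $\operatorname{Mod}_Q$. I will assume, as in \cite{BK2005}, that $Z$ is separable; it is, being compact (or proper, in the weak tangent setting). I will equip $\mathcal P$ with the uniform (sup) metric on parametrized paths, or with the quotient metric up to reparametrization. Either way $\mathcal P$ is a separable metric space whenever $Z$ is, since paths are continuous maps from a compact interval into a separable metric space.

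First, fix a countable dense subset $\{\gamma_j\}_{j\ge1}\subset\mathcal P$ and set, for $j,n\in\mathbb N$,
\[
\mathcal B_{j,n}:=B(\gamma_j,1/n)\setminus\mathcal C .
\]
Let $\mathcal N$ be the set of indices $(j,n)$ with $\operatorname{Mod}_Q(\mathcal B_{j,n})=0$.

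Second, I claim every nonconstant path $\gamma$ that is not thick lies in $\bigcup_{(j,n)\in\mathcal N}\mathcal B_{j,n}$. Since $\gamma$ is not thick, there is $\varepsilon>0$ with $\operatorname{Mod}_Q(B(\gamma,\varepsilon)\setminus\mathcal C)=0$. Choose $n$ with $2/n<\varepsilon$ and then $j$ with $d(\gamma,\gamma_j)<1/n$. By the triangle inequality in $\mathcal P$,
\[
\gamma\in B(\gamma_j,1/n)\subset B(\gamma,2/n)\subset B(\gamma,\varepsilon).
\]
Since $\gamma$ is nonconstant, $\gamma\in\mathcal B_{j,n}$. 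Monotonicity of modulus gives $\operatorname{Mod}_Q(\mathcal B_{j,n})\le \operatorname{Mod}_Q(B(\gamma,\varepsilon)\setminus\mathcal C)=0$, so $(j,n)\in\mathcal N$.

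Third, by countable subadditivity of $\operatorname{Mod}_Q$, the family $(\mathcal P\setminus\mathcal C)\setminus\mathcal P_T$ has modulus at most $\sum_{(j,n)\in\mathcal N}\operatorname{Mod}_Q(\mathcal B_{j,n})=0$. For the ``in particular'' statement, take a family $\Gamma$ of nonconstant paths. Monotonicity gives $\operatorname{Mod}_Q(\Gamma\cap\mathcal P_T)\le\operatorname{Mod}_Q(\Gamma)$. For the reverse inequality, write
\[
\Gamma\subset(\Gamma\cap\mathcal P_T)\cup\bigl((\mathcal P\setminus\mathcal C)\setminus\mathcal P_T\bigr)
\]
and apply subadditivity, using that the second family has modulus zero.

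The main obstacle is the bookkeeping around the topology on $\mathcal P$, specifically ensuring it is separable and that the balls $B(\gamma,\varepsilon)$ in Definition~\ref{defn:thick_path} are the balls of this metric. With the uniform metric on maps $[0,1]\to Z$, separability follows from $C([0,1],Z)$ being separable for separable $Z$. I would also note that the modulus statements only see rectifiable paths, which causes no difficulty here, since monotonicity and subadditivity hold for arbitrary path families.
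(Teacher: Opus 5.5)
The paper states this lemma without proof (it is imported from Bonk--Kleiner \cite{BK2005}). Your argument is the standard proof of this fact, and it is correct: separability of $\mathcal P$ in the uniform metric, then a countable cover of the nonconstant non-thick paths by balls $B(\gamma_j,1/n)$ whose nonconstant parts have $Q$-modulus zero, then countable subadditivity and monotonicity. Your separability assumption costs nothing. It holds for the compact $Z$ and the proper weak tangents where the paper applies the lemma, and in fact for any Ahlfors regular space, since such a space is doubling.
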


\subsection{Existence of Thick Path}
\leavevmode

 Keith and Laakso proved that if an Ahlfors regular space attains its Ahlfors regular conformal dimension, then a weak tangent of the space supports a nonconstant curve family of positive modulus. 

\begin{theorem}[{\cite[Corollary~1.0.2]{KL2004}}]
\label{thm:Laakso}
    Let $(Z,d_Z)$ be a complete Ahlfors $Q$-regular metric space, where $Q>1$ is the Ahlfors regular conformal dimension of $Z$. Then there exists a weak tangent $S$ of $Z$ which carries a family $\Gamma$ of nonconstant curves such that $\operatorname{Mod}_Q(\Gamma)>0$.
\end{theorem}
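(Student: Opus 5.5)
Since this is the Keith--Laakso theorem, which the paper quotes from \cite{KL2004}, I would only outline their strategy, argued by contradiction. Suppose that for every weak tangent $S\in\mathrm{WT}(Z)$, every family of nonconstant curves in $S$ has $Q$-modulus zero. The aim is then to build an Ahlfors regular metric on $Z$, quasisymmetric to $d_Z$, whose dimension is strictly less than $Q$. That contradicts the assumption that $Q$ is the Ahlfors regular conformal dimension. Throughout I use that $Z$ is doubling and uniformly perfect, so weak tangents are proper, unbounded and Ahlfors $Q$-regular. I also use that the collection of weak tangents is closed under pointed Gromov--Hausdorff limits, including weak tangents of weak tangents.

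\emph{Step 1: a discrete modulus that is stable under limits.} Continuous modulus is not stable under pointed convergence, so I would replace it by a combinatorial modulus at a scale.

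1. For a ball $B(x,r)$ and a small ratio $\theta>0$, cover $B(x,2r)$ by a $\theta r$-net.
2. Define $M_\theta(x,r)$ as the discrete $Q$-modulus, on the incidence graph of the net, of the chains joining $B(x,r)$ to $Z\setminus B(x,2r)$.
3. The key lemma would be: if there are $\varepsilon_0>0$ and balls $B(x_k,r_k)$ with $r_k\to0$ and $M_{\theta}(x_k,r_k)\ge\varepsilon_0$ for all small $\theta$, then a rescaled limit $S$ carries a curve family of positive $Q$-modulus.
4. To prove the lemma, pass the admissible-weight lower bounds to the limit. Take a weak limit of the discrete extremal chain measures on $S$, which gives a Fuglede-type measure on curves. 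Then apply Ahlfors regularity and the duality between modulus and curve measures to get a positive continuous modulus.

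By contraposition we obtain the uniform statement needed next: for every $\varepsilon>0$ there is $\theta=\theta(\varepsilon)$ such that $M_\theta(x,r)<\varepsilon$ for every ball with $r$ below some fixed scale.

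\emph{Step 2: deforming the metric.} The small discrete modulus at every ball gives, at each scale $\theta^n$, weights $w$ on the net cells. Their $\ell^Q$-mass is at most $\varepsilon$ times the number of cells, normalised so that every chain crossing an annulus has $w$-length at least $1$.

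1. Multiply these weights across scales, as a multiplicative cascade, and average them to avoid degeneracy.
2. Define a new metric $d'$ by taking the infimum over chains of $\sum \rho\cdot\operatorname{diam}$, with $\rho$ the product of the weights.
3. Check quasisymmetry of $d'$ with $d_Z$: the cascade changes diameters of concentric balls by comparable factors at each scale.
4. Check Ahlfors regularity: the total $Q$-mass at generation $n$ decays like $\varepsilon^n$, while diameters shrink only by a bounded factor. Pushing $\mu$ forward then gives Ahlfors $Q'$-regularity for some $Q'<Q$ once $\varepsilon$ is small.

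\emph{Main obstacle.} I expect the hardest parts to be the limiting lemma in Step 1, that is, turning uniform positive discrete modulus into positive continuous modulus on a weak tangent, and the bookkeeping in Step 2 that keeps the modified measure uniformly Ahlfors regular. For Step 2 I would first try to make the weights locally comparable on adjacent cells, via a smoothing or maximal-function step, so that volume and diameter estimates hold uniformly at every scale.
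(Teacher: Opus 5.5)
The paper gives no proof of this statement. It is imported from Keith--Laakso, so your sketch has to be measured against their argument. The overall structure of your outline is sound. You argue by contradiction, use compactness of weak tangents and weak limits of the dual (extremal) measures on chains to get uniformly small discrete modulus at a fixed ratio $\theta$, and then deform the metric by a multiscale cascade of weights. Keith and Laakso state their main result for conformal Assouad dimension and produce a weak tangent with uniformly big $1$-modulus; your $Q$-modulus variant with $\ell^{Q'}$-bounded chain measures is a legitimate route to this corollary.

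\textbf{The gap.} The step that is supposed to deliver the contradiction, ``pushing $\mu$ forward gives Ahlfors $Q'$-regularity,'' fails. The identity $(Z,d_Z)\to(Z,d')$ is quasisymmetric, so $d_Z$-balls and $d'$-balls are comparable. If $\mu$ were Ahlfors $Q'$-regular for $d'$, you would get $d'(x,y)^{Q'}\asymp\mu\bigl(B(x,d_Z(x,y))\bigr)\asymp d_Z(x,y)^{Q}$. Then $d'$ would be bi-Lipschitz to the snowflake $d_Z^{Q/Q'}$, whose exponent $Q/Q'$ exceeds $1$. That is impossible as soon as $Z$ contains a nonconstant rectifiable curve: subdividing a curve of length $\ell$ into $n$ pieces gives $d'(x_0,x_n)\lesssim \ell^{Q/Q'}n^{1-Q/Q'}\to0$. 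This already rules out the standard Sierpi\'nski carpet, a case where the theorem has real content. Put differently, your weights are nonconstant precisely because they come from small modulus, and then $\mu$ cannot be regular for $d'$. Likewise, ``total $Q$-mass at generation $n$ decays like $\varepsilon^n$'' is a global count. It controls Hausdorff measure, not the local uniformity that regularity needs.

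\textbf{What the weights do give.} After the floor and the smoothing, H\"older with $Q'$ close to $Q$ yields a node-by-node inequality $\sum_c s_c^{Q'}\le1$ over the children $c$ of every cell, where $s_c$ is the relative factor. You also need $\operatorname{diam}_{d'}(c)\asymp\prod s$. Its lower bound requires admissibility for chains crossing the annulus around \emph{every} cell at every scale simultaneously, for example by taking the maximum of the weights of the boundedly many overlapping balls. With these two facts, a stopping-time count bounds the Assouad dimension of $(Z,d')$ by $Q'$.

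\textbf{How to finish.} You then have two options.
\begin{enumerate}
\item Build a genuinely new measure along the cascade. This needs the node sums pinned uniformly, not merely $\le1$; otherwise $\mu'(c)/\operatorname{diam}_{d'}(c)^{Q'}$ drifts exponentially across generations.
\item As in Keith--Laakso's formulation, conclude that the conformal Assouad dimension is $<Q$. Then use that it equals the Ahlfors regular conformal dimension for uniformly perfect spaces, and Ahlfors regular spaces are uniformly perfect.
\end{enumerate}

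\textbf{Smaller repairs in Step~1.}
\begin{enumerate}
\item The limiting lemma must be stated along a diagonal sequence $M_{\theta_k}(x_k,r_k)\ge\varepsilon_0$ with $\theta_k\to0$, since that is what your contraposition actually uses.
\item Before taking limits, discard chains with more than $L/\theta_k$ cells; these carry discrete modulus $\lesssim L^{-Q}$. This makes Arzel\`a--Ascoli applicable and keeps the limiting curve measure from losing mass.
\item The transfer must go through the dual measures only. Small admissible densities on $S$ cannot be pulled back to small discrete weights, because chains need not shadow curves.
\end{enumerate}
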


The positive-modulus curve family provided by the Keith--Laakso theorem lives in a weak tangent space rather than in the original space. The following proposition allows us to transfer it to the original space. 

\begin{proposition}
\label{prop:existence_of_thick_path}
    Let $(Z,d_Z)$ be a compact Ahlfors $Q$-regular metric space, where $Q>1$ is the Ahlfors regular conformal dimension of $Z$. Suppose that $Z$ admits a uniformly quasisymmetric action $G\curvearrowright Z$ which is cocompact on the space of distinct pairs $\operatorname{Pair}(Z)$. Then $Z$ carries a family of nonconstant curves of positive $Q$-modulus, or equivalently, contains a thick path. 
\end{proposition}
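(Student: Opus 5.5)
By Theorem~\ref{thm:Laakso}, there is a weak tangent $(S,p)\in\mathrm{WT}(Z)$ carrying a family $\Gamma$ of nonconstant curves with $\operatorname{Mod}_Q(\Gamma)>0$. The plan is to transport $\Gamma$ into $Z$ using the quasi-M\"obius homeomorphism from Proposition~\ref{prop:sapce_and_their_WTS}, and then control the modulus with Theorem~\ref{thm_Tyson}.

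First, we arrange the hypotheses of Proposition~\ref{prop:sapce_and_their_WTS}. Since $Z$ is Ahlfors $Q$-regular, it is doubling and uniformly perfect. Hence $S$ is unbounded, proper (so locally compact) and Ahlfors $Q$-regular. The proposition then gives a quasi-M\"obius homeomorphism
\[
h|_S : S \to Z\setminus\{h(\infty)\}.
\]

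Next, we reduce to a good subfamily. Write $S=\bigcup_n B_S(p,n)$, and let $\Gamma_n$ be the curves of $\Gamma$ whose image lies in $B_S(p,n)$, with $\Gamma_n$ nondecreasing in $n$. Every curve of $\Gamma$ has compact image, so $\Gamma=\bigcup_n\Gamma_n$. By countable subadditivity of modulus, some $\Gamma_n$ has positive modulus. We may also pass to subcurves staying in a fixed compact ball; subcurves only increase modulus, so there is no loss.

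We then apply Theorem~\ref{thm_Tyson} to $h|_S$. Both $S$ and $Z\setminus\{h(\infty)\}$ are locally compact. The space $Z\setminus\{h(\infty)\}$, viewed as an open subset of $Z$ with the restricted measure, satisfies the upper Ahlfors bound. The lower bound holds at small scales relative to the distance to $h(\infty)$. This yields
\[
\operatorname{Mod}_Q\bigl(h\circ\Gamma_n\bigr)\ge K^{-1}\operatorname{Mod}_Q(\Gamma_n)>0.
\]
Admissible densities on $Z\setminus\{h(\infty)\}$ extend by zero to $Z$, and $\{h(\infty)\}$ has measure zero. Hence $h\circ\Gamma_n$ is a family of nonconstant curves of positive $Q$-modulus in $Z$.

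The expected main obstacle is exactly this Ahlfors regularity issue: $Z\setminus\{h(\infty)\}$ is not globally Ahlfors regular in the metric $d_Z$. If this cannot be handled directly, I would avoid it in one of two ways.

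\emph{Option one.} Compose with the quasisymmetric map $h:(\hat S,\hat d_p)\to Z$. One must check that $(\hat S,\hat d_p)$ with a suitably weighted measure is Ahlfors $Q$-regular, as in Bonk--Kleiner's treatment of the spherical metric.

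\emph{Option two, which I would try first.} Use the local nature of the estimate. Since $h(\overline{B_S(p,n)})$ is a compact set at positive distance from $h(\infty)$, the map $h$ restricted to a neighbourhood is quasisymmetric between regions where both measures are uniformly Ahlfors $Q$-regular at scales below a fixed threshold. Tyson's argument is local in scale, so it still gives the comparability.

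Finally, the equivalence with the existence of a thick path follows from Lemma~\ref{lem_thik_path_mesures_the_modulus}. If a family $\Gamma'$ of nonconstant curves has positive modulus, then $\operatorname{Mod}_Q(\Gamma'\cap\mathcal P_T)=\operatorname{Mod}_Q(\Gamma')>0$, so $\mathcal P_T\ne\varnothing$. Conversely, a thick path $\gamma$ gives $\operatorname{Mod}_Q(B(\gamma,\varepsilon)\setminus\mathcal C)>0$ by definition.
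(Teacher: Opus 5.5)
Your argument is correct, but it is organized differently from the paper's. The paper first extracts a single thick path $\gamma$ from the Keith--Laakso family in $S$ (Lemma~\ref{lem_thik_path_mesures_the_modulus}). It pushes $\gamma$ forward by $h$ and uses quasi-M\"obius invariance of thickness (Lemma~\ref{lem:property_thick_path}) to get a thick path in $Z$, then recovers positive modulus from Lemma~\ref{lem_thik_path_mesures_the_modulus} again. You instead push the whole family forward and compare moduli directly via Theorem~\ref{thm_Tyson}, passing to thick paths only at the end. Both routes rest on Tyson's theorem and need the same input: $h|_S\colon S\to Z\setminus\{h(\infty)\}$ is a quasi-M\"obius homeomorphism between locally compact Ahlfors $Q$-regular spaces. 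Your route makes this input explicit. The paper quotes Lemma~\ref{lem:property_thick_path} in a form stated only for self-maps of $Z$ and applies it to $h$ without comment. On the other hand, the paper's route produces a thick path directly, which is the form used later in the density argument.

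That said, the ``main obstacle'' you flag does not exist, and your stated reason for it is wrong. With the restricted metric and measure, $Z\setminus\{h(\infty)\}$ is Ahlfors $Q$-regular at all scales, with the same constant as $Z$:
\begin{itemize}
\item each ball in it is a ball of $Z$ with at most the single point $h(\infty)$ removed;
\item $\mu(\{h(\infty)\})\le\lim_{r\to0}Cr^Q=0$;
\item its diameter is at most $\operatorname{diam}(Z)$, so the admissible range of radii only shrinks.
\end{itemize}
So the lower bound is not confined to scales small compared with the distance to $h(\infty)$, and Theorem~\ref{thm_Tyson} applies verbatim. Both fallback options, and the truncation to $\Gamma_n$, can be dropped.

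One small correction in the final transfer step. To get $\operatorname{Mod}_Q(h\circ\Gamma_n)>0$ in $Z$, you need the restriction direction, not extension by zero, which gives the opposite inequality. A density admissible in $Z$ restricts to one admissible in $Z\setminus\{h(\infty)\}$ with no larger $Q$-mass, since the curves avoid $h(\infty)$ and arc length is unchanged. Both directions are trivial, and the two moduli coincide.
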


\begin{proof}
    By Theorem~\ref{thm:Laakso}, there exists a weak tangent $(S,p)\in WT(Z)$ and a family $\Gamma$ of nonconstant curves in $S$ such that $\operatorname{Mod}_Q(\Gamma)>0$. By Lemma~\ref{lem_thik_path_mesures_the_modulus}, the family $\Gamma$ contains a thick path; denoted it by $\gamma$. By Proposition~\ref{prop:sapce_and_their_WTS}, there exists a quasi-M\"{o}bius embedding $h:S \to Z$. Since $\gamma$ is a thick path in $S$, Lemma~\ref{lem:property_thick_path}~(3) implies that $h(\gamma)$ is a thick path in $Z$. Hence $Z$ contains a thick path. 

    Finally, by Lemma~\ref{lem_thik_path_mesures_the_modulus}, the existence of a thick path is equivalent, in the present setting, to the existence of a family of nonconstant curves of positive Q-modulus. This completes the proof. 
\end{proof}

\subsection{Density of Endpoints of Thick Paths}
\leavevmode

The following result shows that, under the hypotheses considered here, thick paths are abundant i.e. pairs of points that can be joined by a thick path form a dense subset of $Z \times Z$. 
To show this fact one of the key ingredient is the following lemma of Bonk and Kleiner, which provides a density point for the initial endpoints of a suitable family of thick paths. 

\begin{lemma}[{\cite[Lemma~4.1]{BK2005}}]\label{lem_density_of_entry_points}
    Suppose $(Z,d_Z)$ is a compact Ahlfors $Q$-regular metric space with $Q > 1$, and that $Z$ carries a family of nonconstant paths of positive $Q$-modulus. Equivalently, $Z$ contains a non-constant thick path. 
    
    Then there exist disjoint open balls $B, B' \subset Z$ such that the set of initial points of thick paths joining $B$ to $B'$ has a point of density in $B$. 
\end{lemma}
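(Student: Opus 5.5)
The plan is to follow the Bonk–Kleiner argument: start from a single thick path, use the Lebesgue density theorem to find many thick paths, then localize with a Fubini-type modulus estimate. Let $\gamma_0$ be a nonconstant thick path in $Z$. I would consider the family $\Gamma$ of all nonconstant subpaths of thick paths lying in a small neighbourhood of $\gamma_0$. Choose two points $a\neq b$ on $\gamma_0$. Then choose radii $r>0$ small enough that the balls $B=B(a,r)$ and $B'=B(b,r)$ are disjoint, with $\operatorname{dist}(B,B')\ge d_Z(a,b)/2$.

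Let $\Gamma_{B,B'}$ be the family of paths joining $B$ to $B'$. This family contains the ball $B(\gamma_0,\varepsilon)$ of paths around the appropriate subpath of $\gamma_0$, once $\varepsilon<r$ and we restrict to the subpath running from $a$ to $b$. Since $\gamma_0$ is thick, this ball has positive $Q$-modulus. By Lemma~\ref{lem_thik_path_mesures_the_modulus}, its thick members also have positive modulus. Moreover, subpaths of thick paths are thick, since $\varepsilon$-neighbourhoods of a subpath contain restrictions of the nearby paths and the modulus of restrictions is at least as large. Hence, after truncating to first exit times, we may assume every path in the family starts in $B$ and ends in $B'$.

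Let $E\subset B$ be the set of initial points of thick paths joining $B$ to $B'$. The key step is to show that $\mu(E)>0$, possibly after shrinking $B$. Once this holds, the Lebesgue differentiation theorem applies: it is valid since $\mu$ is doubling, being Ahlfors regular. It gives a point of density of $E$, and by shrinking $B$ slightly around that point we keep it inside $B$.

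To prove $\mu(E)>0$ I would argue by contradiction. Suppose $\mu(E)=0$. Since $E$ need not be Borel, I would first pass to a Borel (or $\mu$-measurable) null set $N\supset E$. Then I would build admissible densities of small mass. Fix $t<\operatorname{dist}(B,B')$; every path in the family travels at least length $t$ after leaving its initial point in $N$. Cover $N$ by balls $B(x_i,s_i)$ with $\sum s_i^Q<\eta$, which is possible by Ahlfors regularity and $\mu(N)=0$. Then take
\[
\rho=\sum_i \frac{1}{s_i\log(t/s_i)}\cdot\frac{1}{d_Z(\cdot,x_i)}\chi_{B(x_i,t)}.
\]
This is the standard logarithmic density showing that a point has $Q$-capacity zero for $Q>1$. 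Each path starting in $B(x_i,s_i)$ and reaching distance $t$ has $\rho$-length at least $1$, up to a constant. Meanwhile $\int\rho^Q$ can be made arbitrarily small, because for $Q>1$ the $Q$-modulus of paths leaving a small ball and reaching a fixed distance tends to zero.

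A cleaner approach is to use the estimate $\operatorname{Mod}_Q(\Gamma(B(x,s),Z\setminus B(x,t)))\le C(\log(t/s))^{1-Q}$ when $Q>1$, or even the cruder bound $Cs^{Q-?}$. Summing this over the cover makes the modulus of the thick family zero, which contradicts its positivity.

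I expect this capacity estimate to be the main obstacle. Its subadditivity summation needs care: the plain bound $\operatorname{Mod}_Q\le C(t-s)^{-Q}\mu(B(x,t))$ does not shrink with $s$. One therefore needs the refined bound for annular families in Ahlfors regular spaces with $Q>1$, combined with a Vitali covering argument. That is where I would concentrate the technical work.
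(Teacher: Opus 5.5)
The paper gives no proof of this lemma; it quotes it from Bonk--Kleiner. Your setup is fine: from a nonconstant thick path you correctly obtain disjoint balls $B,B'$ such that the thick paths joining them have positive $Q$-modulus, and your remark that subpaths of thick paths are thick is correct. Once $E$ has positive outer measure, the Lebesgue density theorem for the doubling measure $\mu$, applied to a measurable hull of $E$, does give a density point in $B$. No shrinking of $B$ is needed, and shrinking would change $E$.

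\textbf{The gap is the key step $\mu(E)>0$.} Your contradiction argument only uses that the paths \emph{start} in a null set $N$ and then travel a definite distance $t$. That information can never force the modulus to vanish. In the flat torus $\mathbb{R}^2/(2\mathbb{Z})^2$, which is compact and Ahlfors $2$-regular, take the vertical unit segments starting on $[0,1]\times\{0\}$. Cauchy--Schwarz along each segment shows this family has $2$-modulus $1$, yet its initial points form a null set. Quantitatively, the annulus bound gives each ball $B(x_i,s_i)$ of your cover a contribution of order $(\log(t/s_i))^{1-Q}$, and this is not controlled by $s_i^Q$. For the segment above you need about $1/s$ balls of radius $s$, so the total is of order $s^{-1}(\log(1/s))^{-1}\to\infty$. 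Hence no sharper annulus estimate or Vitali argument can rescue this step.

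\textbf{The missing idea is that $E$ contains whole arcs, not just endpoints.} Let $\gamma$ be a thick path from $B$ to $B'$, and let $\tau$ be its first exit time from the open ball $B$. For every $s<\tau$, the subpath of $\gamma$ starting at time $s$ is a thick path joining $B$ to $B'$: it is thick because subpaths of thick paths are thick, and nonconstant because $B\cap B'=\varnothing$. So $\gamma([0,\tau))\subset E$, and this arc has length at least $\operatorname{dist}(\gamma(0),Z\setminus B)>0$. Now suppose $E\subset N$ with $N$ Borel and $\mu(N)=0$. Then the density $\rho=\infty\cdot\chi_N$ is admissible for every rectifiable thick path joining $B$ to $B'$, while $\int_Z\rho^Q\,d\mu=0$. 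That forces this family to have modulus zero, a contradiction. This Fuglede-type argument replaces your capacity estimate entirely.
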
 

We can now prove the desired density statement. 

\begin{proposition}
\label{prop:end_points_of_thick_paths_are_dense}
    Let $(Z,d_Z)$ be a compact Ahlfors $Q$-regular metric space whose Ahlfors regular conformal dimension is $Q>1$. Suppose that $Z$ admits a fixed point free uniformly quasisymmetric action $G\curvearrowright Z$ such that the induced action on the space of distinct pairs $\operatorname{Pair}(Z)$ is cocompact. Let
    \[
    M := \left\{ (x,y)\in Z\times Z : \text{$x$ and $y$ can be joined by a thick path} \right\}. 
    \] 
    Then $M$ is dense in $Z\times Z$.
\end{proposition}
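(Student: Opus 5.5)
Following Bonk--Kleiner, the plan is to start from a density point of initial points of thick paths and use the group to blow up a tiny neighborhood of it to almost all of $Z$. By Proposition~\ref{prop:existence_of_thick_path}, $Z$ contains a thick path. Lemma~\ref{lem_density_of_entry_points} then gives disjoint open balls $B,B'$ and a point $p\in B$ which is a density point of the set $E\subset B$ of initial points of thick paths joining $B$ to $B'$. Fix target points $(x,y)\in Z\times Z$ and $\varepsilon>0$. We must find a thick path with endpoints $\varepsilon$-close to $x$ and $y$. Thick paths are preserved by elements of $G$: by Lemma~\ref{lem:property_thick_path} they are preserved by quasi-M\"obius homeomorphisms, and quasisymmetric homeomorphisms of the bounded space $Z$ are quasi-M\"obius. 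Concatenating thick paths with common endpoints gives a thick path, and so does reversing one, since modulus of neighborhoods only grows. So it suffices to produce, for any $x,y$, a point $w$ together with thick paths from near $x$ to $w$ and from $w$ to near $y$.

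The zoom step goes as follows. Take radii $r_k\to0$ around $p$ and use uniform perfectness to pick $x_k^2$ with $d(p,x_k^2)\asymp \lambda_k r_k$. Cocompactness on pairs (Lemma~\ref{lem:cocompact_uniformly_separated}) gives $g_k\in G$ with $d(g_kp,g_kx_k^2)\ge\delta$. By Lemma~\ref{lem:tech_2}(1), with $D_k=E\cap B(p,r_k)$, the images $g_k(E\cap B(p,r_k))$ become Hausdorff-dense in $Z$. To apply the lemma, $D_k$ must be $\varepsilon_k r_k$-dense with $\varepsilon_k/\delta_k\to0$. Since $p$ is a density point and $Z$ is Ahlfors regular, every ball $B(z,\varepsilon r_k)\subset B(p,r_k)$ has measure comparable to $(\varepsilon r_k)^Q$. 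For large $k$ such a ball must therefore meet $E$, because $\mu(B(p,r_k)\setminus E)=o(r_k^Q)$. This yields density $\varepsilon_k\to0$ at a rate we may choose slowly, and we then take $\delta_k=\lambda_k/C$ tending to $0$ more slowly still. Hence for large $k$ there is a point of $g_k(E)$ within $\varepsilon$ of $x$. It is the initial point of the thick path $g_k\gamma$, whose terminal point lies in $g_k(B')$. By Lemma~\ref{lem:tech_2}(2), or directly from quasisymmetry, $g_k(B')$ lies outside $g_k(B(p,r_k))$ and so shrinks to a point $a_k$; after passing to a subsequence, $a_k\to a$.

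The outcome so far is that the endpoints of thick paths starting arbitrarily near any prescribed point $x$ accumulate at the point $a$. The limit point $a$ may depend on $x$, but it is obtained from the same sequence $g_k$, so for fixed $k$ large it serves $x$ and $y$ simultaneously. We get thick paths $g_k\gamma_1$ from near $x$ and $g_k\gamma_2$ from near $y$, both ending in the small set $g_k(B')$, but with possibly different terminal points.

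The main obstacle is connecting the two terminal points, and I would first try to overcome it by arranging that the thick paths end at a common point. Replace $B'$ by the set of terminal points, and apply Lemma~\ref{lem_density_of_entry_points} symmetrically: the reversed paths are thick, so the family of thick paths from $E$ to $B'$ has positive modulus. Then choose, via Lemma~\ref{lem_thik_path_mesures_the_modulus}, a subfamily through which terminal points also have a density point $p'$, and zoom simultaneously at $p$ and $p'$. If this fails, the fallback is to use density of $M$ in a weaker form. Let $\overline M$ be the closure of the pairs $(x,z)$ with $z$ an endpoint of a thick path from near $x$. This set is $G$-invariant and closed, and it contains $\{x\}\times\{a\}$. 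By minimality (Lemma~\ref{lem:minimal_action}), moving $a$ around by $G$ while keeping $x$ nearly fixed gives a dense set of second coordinates. I would then close up with the concatenation/reversal observation together with a diagonal argument using the Hausdorff convergence from Lemma~\ref{lem:tech_2}.
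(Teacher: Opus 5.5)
Your zoom step is correct and coincides with the paper's. Lemma~\ref{lem:tech_2} makes the initial points $g_k(E\cap B(p,r_k))$ Hausdorff-dense in $Z$, while $g_k(B')$ collapses to a point $a$. Note that $a$ is determined by $g_k(B')$ alone, so it does not depend on $x$; you have in fact shown $Z\times\{a\}\subseteq\overline M$.

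The gap is in the last step. The ``main obstacle'' you identify is not a real one, and both remedies you propose fail.
\begin{enumerate}[(a)]
\item Concatenating two thick paths at a common endpoint need not give a thick path. Take two closed unit squares in $\mathbb{R}^2$ meeting only at a corner $c$; this is an Ahlfors $2$-regular space. A short diagonal segment in each square ending at $c$ is thick. But every path uniformly close to their concatenation must pass through $c$, and nonconstant paths through a single point form a family of $2$-modulus zero.
\item A simultaneous zoom at $p$ and at a second density point $p'\neq p$ is unavailable. By Lemma~\ref{lem:tech_2}(2), the same maps $g_k$ squeeze $Z\setminus B(p,r_k)$, and hence every fixed neighborhood of $p'$, to sets of diameter tending to $0$.
\item In your fallback, minimality does not provide elements moving $a$ toward $y$ ``while keeping $x$ nearly fixed''. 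You also close with concatenation again.
\end{enumerate}

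The missing observation is that you should move the whole configuration by one further group element. This keeps the initial points dense while transporting the collapsed endpoint $a$ anywhere. Given $x,y\in Z$ and $\varepsilon>0$, Lemma~\ref{lem:minimal_action} gives $g\in G$ with $d_Z(g(a),y)<\varepsilon$. Since $g$ is a homeomorphism of the compact space $Z$, for large $k$ the set $gg_k(E\cap B(p,r_k))$ is still Hausdorff-close to $g(Z)=Z$, so it meets $B(x,\varepsilon)$. At the same time $gg_k(B')\subset B(y,\varepsilon)$, and the corresponding path $gg_k\gamma$ is thick by Lemma~\ref{lem:property_thick_path}.

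In your fallback language: $\overline M$ is closed, $G$-invariant and contains $Z\times\{a\}$. Hence it contains $g(Z\times\{a\})=Z\times\{g(a)\}$ for every $g\in G$, and therefore $Z\times\overline{Ga}=Z\times Z$. This is exactly how the paper finishes, and it needs neither concatenation nor a second density point.
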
 

\begin{proof}
    By Lemma~\ref{lem_density_of_entry_points}, there exist disjoint open balls $B, B' \subset Z$ and a point of density $x \in B$ of the set of initial points of a family $\Gamma$ of thick paths joining $B$ to $B'$.

    Choose a sequence $\{R_k\}$ with $R_k > 0$ and $R_k \to 0$. Set $B_k := B(x,R_k)$. Let $D_k$ denote the set of initial points of the paths contained in $\Gamma$ that start in $B_k$. Since $x$ is a point of density, we have
    \[
    \varepsilon_k := \frac{\operatorname{dist}_H(D_k,B_k)}{R_k} \to 0 \quad \text{as } k \to \infty.
    \]
    Thus, $D_k$ is $\epsilon_kR_k$-dense in $B_k$. Set $\delta_k := \sqrt{\varepsilon_k}$ and $x_k^1 := x$. Since $Z$ is uniformly perfect, for all sufficiently large $k$ we can choose points $x_k^2 \in Z$ such that
    \[
    \delta_k R_k \le d(x_k^1, x_k^2) \le \lambda_k R_k
    \]
    where $\lambda_k = C \delta_k$ for some $C \ge 1$ independent of $k$.

    By the cocompactness of the action $G \curvearrowright \mathrm{Pair}(Z)$, there exists $\delta>0$ independent of $k$ such that for each $k$ we have an elements $g_k \in G$ satisfying $d_Z(g_k(x_k^1), g_k(x_k^2)) > \delta$. Since $\tfrac{\epsilon_k}{\delta_k}= \sqrt{\epsilon_k} \to 0$, by applying Lemma~\ref{lem:tech_2}, we obtain
    \[
    \operatorname{dist}_H(g_k(D_k), Z) \to 0 \quad \text{and} \quad \operatorname{diam}(Z \setminus g_k(B_k)) \to 0 \quad \text{as } k \to \infty.
    \]

    Passing to a subsequence if necessary, we may assume that the sets $Z \setminus g_k(B_k)$ Hausdorff converge to a singleton set $\{z\}$, for some $z \in Z$. Since $B'$ and $B_k$ are disjoint for all sufficiently large $k$, we have $B' \subset Z \setminus B_k$. Consequently, 
    \[
    \operatorname{dist}_H\bigl(g_k(B'), \{z\}\bigr) \to 0 \quad \text{as } k \to \infty.
    \]

    Now let $z_1, z_2 \in Z$ and $\varepsilon > 0$ be arbitrary. By minimality of the action $G\curvearrowright Z$, see Lemma~\ref{lem:minimal_action}, there exists $g \in G$ such that $g(z) \in B(z_1,\varepsilon)$. For sufficiently large $k$, the Hausdorff convergence above gives 
    \[
    g \circ g_k(B') \subset B(z_1,\varepsilon),
    \]
    while the convergence $\operatorname{dist}_H\bigl( g_k(D_k),Z \bigr) \to 0$ 
    \[
    g \circ g_k(D_k) \cap B(z_2,\varepsilon) \ne \emptyset.
    \]
    
    Every point of $D_k$ is the initial point of a thick path joining $B_k$ to $B'$. Hence, after applying the quasisymmetric (or equivalently, quasi-M\"{o}bius) homeomorphism $g\circ g_k$, each corresponding image path is still thick, by Lemma~\ref{lem:property_thick_path}~(3). Therefore, there is a thick path joining a point of $B(z_1,\varepsilon)$ to a point of $B(z_2,\varepsilon)$. 

    Since $z_1,z_2\in Z$ and $\varepsilon>0$ were arbitrary, it follows that $M$ is dense in $Z\times Z$.
\end{proof} 


\section{Loewner Property of Bowditch Boundary} 

In this section, we prove that the Bowditch boundary in the setting of Theorem~\ref{thm_main_1} is a $Q$-Loewner space. By the criterion of Bonk-Kleiner Proposition~\ref{prop_Loewner_condition_with_ball}, it suffices to establish a uniform modulus estimate for pairs of balls. We obtain this estimate in the following proposition using the uniformly quasisymmetric action and the assumption that $Q$ is the Ahlfors regular conformal dimension of $Z$.

\begin{proposition}\label{prop_ball_modulus}
    Let $(Z,d_Z)$ be a compact Ahlfors $Q$-regular metric space whose Ahlfors regular conformal dimension is $Q>1$. Suppose that $Z$ admits a fixed point free uniformly quasisymmetric action $G\curvearrowright Z$ such that the induced action on the space of distinct pairs $\operatorname{Pair}(Z)$ is cocompact.  
    
    For every $C > 0$, there exist constants $m=m(C) > 0$ and $L=L(C) > 0$ with the following property: if $B, B' \subset Z$ are balls of radius $R>0$ satisfying \( \operatorname{dist}(B,B') \leq C R,\) then the family of paths joining $B$ to $B'$ and having length at most $L R$ has $Q$-modulus at least $m$. 
\end{proposition}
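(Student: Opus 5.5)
We argue by contradiction, following the strategy of Bonk and Kleiner in \cite[Section~5]{BK2005}. Suppose that for some $C>0$ no pair $(m,L)$ works. Then for every $k$ we obtain balls $B_k=B(a_k,R_k)$, $B_k'=B(a_k',R_k)$ with $\operatorname{dist}(B_k,B_k')\le CR_k$ such that the family $\Gamma_k$ of paths joining $B_k$ to $B_k'$ with length at most $kR_k$ has $\operatorname{Mod}_Q(\Gamma_k)<1/k$. By compactness of $Z$ and Ahlfors regularity, the radii $R_k$ cannot stay bounded below without contradicting the conclusion for a fixed configuration. Indeed, if $R_k\ge r_0>0$, we pass to a limit configuration of balls and use Proposition~\ref{prop:end_points_of_thick_paths_are_dense}: there is a thick path $\gamma$ joining slightly shrunken versions of the limit balls. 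Thickness gives $\operatorname{Mod}_Q(B(\gamma,\varepsilon)\setminus\mathcal C)>0$, and after discarding paths of large length (their modulus tends to zero as the length cutoff grows, since $\rho=1/(LR)$-type densities show $\operatorname{Mod}_Q\{\text{length}\ge L'\}\lesssim (L')^{-Q}\mu(Z)$), we obtain a uniform lower bound. We may therefore assume $R_k\to0$.

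In the small-scale case we rescale with the group action. Pick $x_k^1=a_k$ and $x_k^2=a_k'$ (or, by uniform perfectness, a point at distance comparable to $R_k$). By Lemma~\ref{lem:cocompact_uniformly_separated}, choose $g_k\in G$ with $d_Z(g_kx_k^1,g_kx_k^2)\ge\delta$. The maps $g_k$ are uniformly quasisymmetric, hence uniformly quasi-M\"obius. We apply Lemma~\ref{lem:tech_2} to a ball $B(a_k,\Lambda R_k)$ with large fixed $\Lambda$, here with $\lambda_k,\delta_k$ of fixed size rather than tending to zero. The upshot is that $g_k(B_k)$ and $g_k(B_k')$ contain balls of radius $\ge r_0>0$, with $r_0$ depending only on $C$ and $\eta$, and that these image balls lie at distance $\le C'$ from each other. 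This uses the quasisymmetry estimate together with uniform perfectness, which gives points of $B_k$ at distance $\ge R_k/C_{up}$ from the center.

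Next we pass to the limit. After a subsequence, the image balls converge to fixed disjoint-or-not balls $U,U'$ of radius $r_0/2$ contained in all $g_k(B_k)$, $g_k(B_k')$ for large $k$. By the first paragraph's argument, the family $\Gamma(U,U')$ of paths of length at most some $L_0$ has modulus $\ge m_0>0$. Pulling back by $g_k^{-1}$ and using Theorem~\ref{thm_Tyson} (uniform in $k$, since the constant depends only on $\eta$ and the regularity constants), we get $\operatorname{Mod}_Q\bigl(g_k^{-1}\Gamma(U,U')\bigr)\ge m_0/K$.

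The main obstacle is the length control, because pullbacks of short paths need not have length $\le kR_k$. To handle it, I would show that for a path $\gamma\subset Z$ the preimage under $g_k^{-1}$ of the portion lying in $g_k(B(a_k,\Lambda R_k))$ stays in $B(a_k,\Lambda R_k)$. Combined with the fact that the modulus of paths of length $\ge kR_k$ within a ball of radius $\Lambda R_k$ is at most $\mu(B(a_k,\Lambda R_k))/(kR_k)^Q\lesssim(\Lambda/k)^Q$, this forces $\operatorname{Mod}_Q(\Gamma_k)\ge m_0/K-O(k^{-Q})$, contradicting $\operatorname{Mod}_Q(\Gamma_k)<1/k$. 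Ensuring that pulled-back paths stay inside $B(a_k,\Lambda R_k)$ requires choosing $U,U'$ and the limit paths away from the small complementary set $Z\setminus g_k(B(a_k,\Lambda R_k))$, whose diameter tends to zero by Lemma~\ref{lem:tech_2}(2). This in turn requires that modulus not be concentrated through a single point, which holds for $Q>1$ since point families have zero $Q$-modulus.
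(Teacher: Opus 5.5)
Your argument is correct in outline, and its core is the same as the paper's. The large-radius case goes through density of thick-path endpoints (Proposition~\ref{prop:end_points_of_thick_paths_are_dense}). The small-radius case takes $g_k$ from Lemma~\ref{lem:cocompact_uniformly_separated} applied to $(a_k,a_k')$ and then uses Theorem~\ref{thm_Tyson}. One correction: the ball containment does not come from Lemma~\ref{lem:tech_2}. It is the one-line quasisymmetry estimate: if $x\notin B_k$, then $d_Z(g_k a_k,g_k x)\ge \delta/\eta(C+2)$.

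Where you genuinely differ is the length control. The paper first proves the unrestricted bound $\operatorname{Mod}_Q\Gamma(B,B')\ge m_0(C)$. It then removes long paths in one step with \cite[Lemma~2.8]{BK2005}: in an Ahlfors $Q$-regular space with $Q>1$, the paths starting in a ball of radius $R$ with length at least $LR$ have modulus at most $m_0/2$ once $L=L(m_0)$ is large. Every pulled-back path $g_k^{-1}\gamma$ starts in $B_k$, so that lemma would also close your argument directly, with no localization.

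Your localization route can be made to work, but as written it needs three repairs.
\begin{enumerate}
\item $\Lambda$ cannot be fixed. Lemma~\ref{lem:tech_2}(2) requires $\lambda_k\to 0$, and with fixed $\Lambda$ its proof only gives $\operatorname{diam}\bigl(Z\setminus g_k(B(a_k,\Lambda R_k))\bigr)\lesssim \eta'(c/\Lambda)$, which does not tend to $0$ in $k$. Take $\Lambda_k\to\infty$ with $\Lambda_k/k\to 0$ (e.g.\ $\Lambda_k=\sqrt{k}$); the error term is then $O\bigl((\Lambda_k/k)^Q\bigr)$ rather than $O(k^{-Q})$.
\item Pass to a subsequence along which $Z\setminus g_k(B(a_k,\Lambda_k R_k))$ converges to a point $z_\infty$. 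You must then shrink $U$ to a ball at some distance $\epsilon_0>0$ from $z_\infty$, since $z_\infty$ may well lie in $U$.
\item ``Point families have modulus zero'' is not enough. Modulus is only an outer measure and is not continuous along decreasing families, so this gives no uniform smallness. You need the quantitative estimate $\operatorname{Mod}_Q\Gamma\bigl(\bar B(z_\infty,\epsilon),\,Z\setminus B(z_\infty,\epsilon_0)\bigr)\lesssim (\log(\epsilon_0/\epsilon))^{1-Q}$, which holds for $Q>1$ via the logarithmic density.
\end{enumerate}

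With these repairs the contradiction goes through, with a lower bound that depends on the subsequence, which is harmless here. However, the third ingredient is exactly what proves \cite[Lemma~2.8]{BK2005}. So your detour buys nothing over the paper's two-step structure (a uniform $m_0$, then a cutoff $L$), and that structure also makes the dependence $m=m_0/2$, $L=L(m_0)$ explicit.
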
 

\begin{proof}
    Fix $C > 0$. We first claim that there exists $m_0=m_0(C) > 0$ such that, whenever $B, B' \subset Z$ are balls of radius $R>0$ satisfying \(\operatorname{dist}(B,B') \leq C R\), we have  \begin{equation}\label{eq:uniform_ball_modulus} 
        \operatorname{Mod}_Q(\Gamma(B,B'))\ge m_0.
    \end{equation} 

    Suppose, for a contradiction, that no such $m_0$ exists. Then there exist balls $B_k=B(z_k,R_k)$ and $B_k'=B(z_k',R_k)$ with $\operatorname{dist}(B_k,B_k')\le CR_k$ and 
    \begin{equation}\label{eq_modulus-vanishing}
        \operatorname{Mod}_Q(\Gamma(B_k,B_k'))\to0.
    \end{equation}

    Since $Z$ is compact, after passing to a subsequence, let $z_k \to z$ and $z_k' \to z'$ for some $z,z' \in Z$.  We must have $R_k \to 0$. Otherwise, after passing to a subsequence, let $R_k \geq r >0$, which implies that for all sufficiently large $k$, 
    \[
    B(z,r/2) \subset B_k \qquad \text{and} \qquad B(z',r/2) \subset B_k'.
    \]
    By the density of endpoints of thick paths, Proposition~\ref{prop:end_points_of_thick_paths_are_dense}, there is a thick path joining these two balls. Hence, for all sufficiently large $k$, we have
    \[
    \operatorname{Mod}_Q\bigl(\Gamma(B_k,B_k')\bigr) \geq \operatorname{Mod}_Q\bigl(\Gamma(B(z,r/2),B(z',r/2))\bigr) >0,
    \]
    contradicting \eqref{eq_modulus-vanishing}. Thus $R_k \to 0$. 

    Since $d_Z(z_k,z_k')\le(C+2)R_k$, cocompactness on pairs gives $g_k\in G$ and $\delta>0$ such that 
    \begin{equation}\label{eq:separation_1}
        d_Z\bigl(g_k(z_k),g_k(z_k')\bigr)\geq\delta.
    \end{equation} 
    If $x_k\notin B_k$, then $d_Z(z_k,x_k)\ge R_k$ and hence
    \[
    \frac{d_Z(z_k,z_k')}{d_Z(z_k,x_k)}\le C+2.
    \]
    Uniform quasisymmetry and the above separation inequality \eqref{eq:separation_1} implies that
    \begin{align*}
        & \frac{d_Z\bigl(g_k(z_k),g_k(z_k')\bigr)} {d_Z\bigl(g_k(z_k),g_k(x_k)\bigr)} \leq \eta\left( \frac{d_Z(z_k,z_k')}{d_Z(z_k,x_k)} \right) \leq \eta(C+2), \\
        \implies & d_Z\bigl(g_k(z_k),g_k(y_k)\bigr) \geq \frac{\delta}{\eta(C+2)}.
    \end{align*}  
    Thus, for $0< \varepsilon < \frac{\delta}{2\eta(C+2)}$ independent of $k$,
    \[
    B(g_k(z_k),2\varepsilon)\subset g_k(B_k).
    \]
    Similarly,
    \[
    B(g_k(z_k'),2\varepsilon)\subset g_k(B_k').
    \]    
    Since $Z$ is compact, Passing to a subsequence, let $g_k(z_k)\to w$ and $g_k(z_k')\to w'$. Hence, for all large $k$,
    \[
    \widetilde B:=B(w,\varepsilon)\subset g_k(B_k), \qquad \widetilde B':=B(w',\varepsilon)\subset g_k(B_k').
    \]

    Since the $g_k$ are uniformly quasisymmetric, Tyson's theorem, Theorem~\ref{thm_Tyson}, gives a constant $K>0$, independent of $k$, such that
    \[
    \operatorname{Mod}_Q(\Gamma(B_k,B_k')) \ge K\,\operatorname{Mod}_Q \bigl(\Gamma(g_k(B_k),g_k(B_k'))\bigr).
    \]
    By monotonicity,
    \[
    \operatorname{Mod}_Q(\Gamma(B_k,B_k'))
    \ge K \,\operatorname{Mod}_Q(\Gamma(\widetilde B,\widetilde B'))>0,
    \]
    where the last inequality follows from the density of endpoints of thick paths. This contradicts the assumption and proves \eqref{eq:uniform_ball_modulus}. Hence, the claim is proved.

    Finally, choose $L\ge2$ sufficiently large as in \cite[Lemma~2.8]{BK2005} so that every family of paths starting in a ball of radius $R$ and having length at least $LR$ has $Q$-modulus at most $m_0/2$. For arbitrary balls $B,B'$ of radius $R$, let $\Gamma_1$ consist of paths joining $B$ to $B'$ of length at most $LR$, and let $\Gamma_2$ consist of the remaining paths. Then by using the above claim
    \[
    m_0\le\operatorname{Mod}_Q(\Gamma(B,B')) \le\operatorname{Mod}_Q(\Gamma_1)+\operatorname{Mod}_Q(\Gamma_2) \le\operatorname{Mod}_Q(\Gamma_1)+\frac{m_0}{2}. 
    \]
    Therefore
    \[
    \operatorname{Mod}_Q(\Gamma_1)\ge \frac{m_0}{2}>0.
    \]
    Taking $m := \frac{m_0}{2}>0$ proves the result.
\end{proof}

To complete the proof of Theorem~\ref{thm_main_1}, we now assume that $Z$ is metric space as in Theorem~\ref{thm_main_1}. Since $Z$ is compact, it is complete. Poposition~\ref{prop_ball_modulus} provides that for every $C>0$, there exists constants $m=m(C)>0$ and $L=L(C)>0$ satisfying the hypothesis of Proposition~\ref{prop_Loewner_condition_with_ball}. Therefore, $Z$ is a $Q$-Loewner space. This proves Theorem~\ref{thm_main_1}.

\bibliographystyle{amsalpha}
\bibliography{Bibliography} 


\end{document}